\newtheorem{thm}{Theorem}[section]
\newtheorem{lemma}[thm]{Lemma}
\theoremstyle{definition}
\newtheorem{defn}[thm]{Definition}
\theoremstyle{remark}
\newtheorem{rem}[thm]{Remark}
\numberwithin{equation}{section}
\newcommand{\norm}[1]{\left\Vert#1\right\Vert}
\newcommand{\Real}{\mathbb R}
\newcommand{\N}{\mathbb{N}}
\newcommand{\R}{\mathbb{R}}
\begin{document}

\title{An isoperimetric inequality for a nonlinear eigenvalue problem}
\author{Gisella Croce, Antoine Henrot and Giovanni Pisante}
\address{Laboratoire de Math\'ematiques Appliqu\'ees du Havre, Universit\'e du Havre, 25, rue Philippe Lebon, 76063 Le Havre (FRANCE)
\\
Institut \'Elie Cartan Nancy, Nancy Universit\'e-CNRS-INRIA, B.P. 70239, 54506 Vandoeuvre les Nancy (FRANCE)
\\
Dipartimento di Matematica, Seconda Universit\`a degli studi di Napoli, Via Vivaldi, 43, 81100 Caserta (ITALY)}
\email{gisella.croce@univ-lehavre.fr, henrot@iecn.u-nancy.fr, giovanni.pisante@unina2.it}
\subjclass{35J60, 35P30, 47A75, 49R50, 52A40}
\keywords{Shape optimization, eigenvalues, symmetrization, Euler equation, shape derivative}

\maketitle

\begin{abstract}
We prove an isoperimetric inequality of the Rayleigh-Faber-Krahn type for
a nonlinear generalization of the first twisted Dirichlet eigenvalue. More
precisely, we show that the minimizer among sets of given volume is the
union of two equal balls.
\end{abstract}

\section{Introduction}
In this note we study a  generalized version of the so called twisted Dirichlet eigenvalue problem. More precisely, for $\Omega$ an open bounded subset of $\R^N$ we set
\begin{equation}\label{wirtinger}
\lambda^{p,q}(\Omega)=
\inf
\left\{
\frac{\norm{\nabla v}_{L^p(\Omega)}}{\norm{v}_{L^q(\Omega)}}, v\neq 0, v\in W^{1,p}_{0}(\Omega), \int\limits_{\Omega}|v|^{q-2}v\,dx=0\,
\right\}.
\end{equation}
Among the sets $\Omega$ with fixed volume, we are interested in characterizing those which minimize  $\lambda^{p,q}(\Omega)$. In other words we look for an isoperimetric inequality of Rayleigh-Faber-Krahn type. This kind of inequality is related to the optimization of the first eigenvalue for the Dirichlet problem associated to nonlinear operators in divergence form and have been widely studied for functionals that do not involve mean constraints. In such cases a rearrangement technique proves that the minimizing set is a ball and several results concerning its stability are also available (see for instance \cite{Nazarov:2001p297},\cite{1158.35069},\cite{FusMagPra06-000}). When  mean type constraints are considered together with the Dirichlet boundary condition in an eigenvalue problem, the optimization problem becomes more difficult, since one is lead to deal with non local problems. Due to the fact that an eigenfunction for $\lambda^{p,q}(\Omega)$  is forced to change sign inside $\Omega$, and hence has at least  two nodal domains,  one cannot expect in general
to have a radial optimizer.

The adjective \emph{twisted} was introduced by Barbosa and B\'erard in \cite{Barbosa:2000p18},
in the study of spectral properties of the second variation of a constant mean curvature immersion of a Riemannian manifold.
In that framework a Dirichlet eigenvalue problem arose naturally  with a vanishing mean constraint. The condition on the mean value comes from the fact that the variations under consideration preserve some balance of volume.

Further results in this direction can be found in the paper of Freitas and Henrot \cite{FreHen04-000}, where, dealing with the linear case, the authors solved
the shape optimization problem for the first twisted Dirichlet eigenvalue. In particular they considered $\lambda^{2,2}(\Omega)$, and they proved that the only optimal shape is given by a pair of balls of equal measure. The one-dimensional case has also attracted much interest.
In \cite{DacGanSub92-000}, Dacorogna, Gangbo and Sub\'ia studied the following generalization of the Wirtinger inequality
\begin{equation}\label{wir}
\inf\left\{  \frac{\|u'\|_{L^{p}((-1,1))}}{\|u\|_{L^{q}((-1,1))}}\;,\;u\in W^{1,p}(-1,1)\setminus\{0\}\;,\;u(-1)=u(1)=0\;,\;\int_{-1}^{1}|u|^{q-2}u\,dx=0\right\}
\end{equation}
for $p,q>1$
proving that the optimizer is an odd function. Moreover they explained the connection between the value of $\lambda^{p,p'}((-1,1))$, where $p=\frac{p}{p-1}$, and an isoperimetric inequality. Indeed, let $A\subset \R^2$ whose boundary is a simple closed curve $t \in [-1,1]\to (x(t),y(t))$ with $x, y \in W^{1,p}_0((-1,1))$.
Let
$$
L(\partial A)=\int_{-1}^{1}(|x'(t)|^p + |y'(t)|^p)^{\frac 1p}dt
$$
and
$$
M(A)=\frac 12\int_{-1}^{1}[y'(t)x(t)-y(t)x'(t)]dt\,.
$$
Then $L^2(\partial A)-4\lambda^{p,p'}((-1,1))M(A)\geq 0$. The case of equality holds if and only if $A=\{(x,y)\in \R^2: |x|^{p'}+|y|^{p'}=1\}$,
up to a translation and a dilation.

Several other results are available in the one-dimensional case, see for instance \cite{MR1974431}, \cite{busl}, \cite{MR1701790}, \cite{MR1757396}, \cite{Farroni:2010p643} and the references therein for further details.

Our aim here, as in \cite{FreHen04-000}, is to prove that the optimal shape for $\lambda^{p,q}(\Omega)$
is a pair of equal balls. The main result can be stated as follows.

\begin{thm}\label{main_theorem}
Let $\Omega$ be an open bounded subset of $\R^N$.
Then, for
\begin{equation}\label{hypotheses_pqN}
1<p<\infty\,\, \textnormal{and}\,\,\,
\left\{
\begin{array}{l}
1<q< p^* \,,\,\,\textnormal{if}\,\, 1<p<N
\\
1<q<\infty\,,\,\,\textnormal{if}\,\, p\geq N
\end{array}
\right.
\end{equation}
we have
\[
\lambda^{p,q}(\Omega)\geq \lambda^{p,q}(B_{1}\cup B_{2}),
\] where $B_{1}$ and $B_{2}$ are disjoint  balls of measure $|\Omega|/2$.
\end{thm}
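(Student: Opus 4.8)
The plan is to combine a symmetrization step, which reduces the problem to a pair of balls, with an explicit finite-dimensional optimization over how the available volume is split between them.

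\emph{Step 1: an extremal function.} Under \eqref{hypotheses_pqN} the embedding $W^{1,p}_0(\Omega)\hookrightarrow L^q(\Omega)$ is compact, so a minimizing sequence for \eqref{wirtinger}, normalized in $L^q$, converges strongly in $L^q$; since $v\mapsto\int_\Omega|v|^{q-2}v\,dx$ is continuous on $L^q(\Omega)$ the mean constraint survives the limit, and by weak lower semicontinuity of $\norm{\nabla\,\cdot\,}_{L^p}$ the infimum is attained by some $u$, which must change sign. Write $u=u^+-u^-$ and $\omega^\pm=\{\pm u>0\}$, so that $|\omega^+|+|\omega^-|\le|\Omega|$. (On a general $\Omega$ one may instead run the next step along a minimizing sequence.)

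\emph{Step 2: symmetrization onto two balls.} Pick disjoint balls $B^+,B^-$ with $|B^\pm|=|\omega^\pm|$, let $w^\pm$ be the Schwarz symmetrizations of $u^\pm$ viewed on $B^\pm$, and set $w:=w^+$ on $B^+$, $w:=-w^-$ on $B^-$. By the P\'olya--Szeg\H{o} inequality and disjointness of the supports, $\norm{\nabla w}_{L^p}^p\le\norm{\nabla u}_{L^p}^p$; equimeasurability gives $\norm{w}_{L^q}=\norm{u}_{L^q}$ and, crucially, $\int_{B^\pm}(w^\pm)^{q-1}=\int_{\omega^\pm}(u^\pm)^{q-1}$, so that $\int|w|^{q-2}w\,dx=0$. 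Thus $w$ is admissible for $B^+\cup B^-$ and
\[
\lambda^{p,q}(\Omega)=\frac{\norm{\nabla u}_{L^p(\Omega)}}{\norm{u}_{L^q(\Omega)}}\ \ge\ \frac{\norm{\nabla w}_{L^p}}{\norm{w}_{L^q}}\ \ge\ \lambda^{p,q}(B^+\cup B^-),\qquad |B^+|+|B^-|\le|\Omega|.
\]
It therefore remains to prove $\lambda^{p,q}(B_a\cup B_b)\ge\lambda^{p,q}(B_1\cup B_2)$ for all disjoint balls with $|B_a|+|B_b|\le|\Omega|$, where $|B_1|=|B_2|=|\Omega|/2$.

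\emph{Step 3: reduction to an explicit optimization.} Write $\mu^{p,q}(D)$ for the infimum in \eqref{wirtinger} without the mean constraint. By scaling, $\mu^{p,q}(B)=c_{N,p,q}\,|B|^{-\gamma}$ with $\gamma=\tfrac1N+\tfrac1q-\tfrac1p$, and \eqref{hypotheses_pqN} is exactly what makes $\gamma>0$, i.e.\ makes $\mu^{p,q}$ strictly decreasing in the volume; hence one expects it favorable to use all of $|\Omega|$. A minimizer on $B_a\cup B_b$ exists and solves $-\diverg(|\nabla w|^{p-2}\nabla w)=A|w|^{q-2}w+B|w|^{q-2}$; using this Euler--Lagrange equation, plus one further symmetrization of the positive and of the negative part of the minimizer, one reduces to a minimizer that is radial and of one sign on each ball. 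Its restrictions are then determined by the two $L^q$-masses, linked through the mean constraint, so $\lambda^{p,q}(B_a\cup B_b)$ becomes the minimum of an explicit function of $(|B_a|,|B_b|)$ and of those masses. Optimizing in the masses and then in the split (a first-order, shape-derivative type computation in the one-parameter family of volume splits), one finds that $|B_a|=|B_b|=|\Omega|/2$ with the antisymmetric pair $(\phi,-\phi)$, $\phi$ a first $\mu^{p,q}$-eigenfunction of the ball, is optimal, with value $2^{1/p-1/q}\mu^{p,q}(B_1)=\lambda^{p,q}(B_1\cup B_2)$.

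\emph{Main obstacle.} The heart of the matter is Step~3: controlling the sign and radial structure of extremals on a union of balls, and then pushing through the optimization, are both delicate because the mean constraint genuinely couples the two balls and prevents the extremal from concentrating. A crude Faber--Krahn bound applied separately to $u^+$ and $u^-$ is \emph{not} sufficient — for $q>p$ it would spuriously favor very unequal balls, since then $t\mapsto t^{p/q}$ is concave — so one must use the \emph{sharp} constrained Faber--Krahn profile on a ball, namely the least $p$-Dirichlet energy among $L^q$-normalized functions with a prescribed value of $\int(\,\cdot\,)^{q-1}$, together with its blow-up as that value approaches the ends of its admissible (H\"older) range. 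This profile, and the convexity/concavity it interacts with, is what ultimately forces the equal split.
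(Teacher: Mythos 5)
Your Steps 1 and 2 are correct and coincide in substance with the paper's Lemma 2.1 and Theorem \ref{thmpremiereinegalite}: existence of a sign-changing extremal and the symmetrization of its positive and negative parts onto two balls, with equimeasurability preserving both the $L^q$-norm and the constraint $\int|v|^{q-2}v\,dx=0$ (your version is even slightly more direct, since admissibility of the rearranged function makes the auxiliary problem $\lambda^*$ of the paper unnecessary). The problem is Step 3, which is where the theorem actually lives, and which you only announce rather than prove. Concretely: (i) the claim that a minimizer on a general pair $B_a\cup B_b$ is radial and of one sign on each ball is unproved — when the balls are very unequal nothing prevents the positive and negative parts from both sitting inside the larger ball, and the ``one further symmetrization'' you invoke changes the pair of balls, so it cannot be used to stay within the family over which you want to optimize; (ii) for general $p,q$ there is no closed form for the radial solutions of the $p$-Laplace Lane--Emden equation, so the ``explicit function of $(|B_a|,|B_b|)$ and of the masses'' is not explicit, and ``optimizing in the masses and then in the split'' is a restatement of the problem, not an argument; in particular, for $q<p$ even your asserted value $2^{1/p-1/q}\mu^{p,q}(B_1)$ for the equal pair does not follow from the power-mean/concavity considerations you allude to and would itself require proof; (iii) the ``sharp constrained Faber--Krahn profile'' with its endpoint blow-up, and the convexity that is supposed to force the equal split, are never constructed or established — you correctly identify this as the main obstacle, but you do not overcome it.

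For comparison, the paper crosses exactly this gap with a specific chain of tools, none of which (nor any substitute) appears in your proposal: the Euler equation with vanishing Lagrange multiplier for the mean constraint (Theorem \ref{idee_daco_gangbo_subia}), simplicity of the eigenvalue on a union of two balls via the ODE uniqueness/comparison Lemma \ref{franzina_lamberti} (Theorem \ref{simplicity}), which legitimizes the domain derivative and yields the optimality condition $\left|\frac{\partial u_1}{\partial \nu_1}\right|=\left|\frac{\partial u_2}{\partial \nu_2}\right|$ (Theorem \ref{Antoine}), and finally the divergence theorem applied to \eqref{equazione_eulero_dgs} together with the Pohozaev identity \eqref{pohozaev_identity} (to exclude a vanishing normal derivative), which forces $|\partial B_1|=|\partial B_2|$ (Theorem \ref{dominio_critico}). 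Until you supply either these ingredients or a genuine proof of your constrained-profile convexity scheme (including the degenerate limit in which one ball disappears), the proposal establishes only the reduction to two balls and not the equality of the two balls, i.e.\ not the theorem.
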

The rest of the paper is devoted to the proof of Theorem \ref{main_theorem} and it is divided into  two steps. In the first one,
using the symmetrization technique,
 we  show that it is enough to minimize the functional $\lambda^{p,q}$ on sets given by the union of two disjoint balls $B_1$ and $B_2$ (not necessarily equal) and to identify the minimizing pairs.
Moreover we write the Euler equation for a minimizer $u$ of $\lambda^{p,q}(B_1\cup B_2)$, proving that the Lagrange multiplier associated to the constraint $$\displaystyle \int_{\Omega}|u|^{q-2}u=0$$ is zero (cf. Theorem \ref{idee_daco_gangbo_subia}).

The second step, which consists in showing  that the two optimal balls have to be equal, is more subtle. In the case $p=q=2$ solved in \cite{FreHen04-000}, the proof is based
on the explicit formula for the (radial) solutions to the Euler equation of the functional and
on fine properties of the zeroes of Bessel functions. Here we use a more geometric argument obtaining as a byproduct a simpler proof of the results of Freitas and Henrot.
More precisely  $\lambda^{p,q}(B_1 \cup B_2)$ is attained at a function
$u=u_1\chi_{B_1}-u_2\chi_{B_2}$, with $u_{1}$ and $u_{2}$  radial positive functions.
If we look at $\lambda^{p,q}(B_1 \cup B_2)$
as a function of sets we obtain the following optimality condition  from  the domain derivative (cf. Theorem \ref{Antoine}):
\[
\left|\frac{\partial u_1}{\partial \nu_1}\right|=\left|\frac{\partial u_2}{\partial \nu_2}\right|\,.
\]
From the other hand, the divergence theorem applied to the Euler equation gives that
\[
 \left|\frac{\partial u_1}{\partial \nu_1}\right|^{p-1}|\partial B_1|=
\left|\frac{\partial u_2}{\partial \nu_2}\right|^{p-1}|\partial B_2|\,.
\]
This,  combined with the previous condition, implies that $B_{1}$ and $B_{2}$ have the same measure.

\section{The first generalized twisted eigenvalue}

We start our study proving that the the value $\lambda^{p,q}(\Omega)$ is  attained for any choice of a bounded open set $\Omega\subset \R^N$.

\begin{lemma}
Assume (\ref{hypotheses_pqN}). Then $\lambda^{p,q}(\Omega)>0$ and there exists a bounded function $u\in W^{1,p}_0(\Omega)$
such that
$$
\lambda^{p,q}(\Omega)=\frac{\norm{\nabla u}_{L^p(\Omega)}}{\norm{u}_{L^q(\Omega)}} \,\,\textrm{and}\,\,\,\int_{\Omega}|u|^{q-2}u\,dx=0\,.
$$
\end{lemma}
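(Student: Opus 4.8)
The plan is to establish existence and positivity via the direct method of the calculus of variations, being careful about the nonlocal constraint $\int_\Omega |v|^{q-2}v\,dx=0$. First I would observe that the infimum $\lambda^{p,q}(\Omega)$ is positive: by the Poincar\'e inequality $\norm{v}_{L^p(\Omega)}\le C(\Omega,N,p)\norm{\nabla v}_{L^p(\Omega)}$ for $v\in W^{1,p}_0(\Omega)$, together with the continuous embedding $W^{1,p}_0(\Omega)\hookrightarrow L^q(\Omega)$ which holds precisely under hypothesis \eqref{hypotheses_pqN} (Sobolev embedding when $p<N$ with $q<p^*$, Morrey when $p>N$, and the standard embedding into all $L^q$ when $p=N$, recalling $\Omega$ is bounded), one gets $\norm{v}_{L^q(\Omega)}\le C'\norm{\nabla v}_{L^p(\Omega)}$, hence the quotient is bounded below by $1/C'>0$. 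Note the constraint set is nonempty: for any two disjoint balls $B',B''\subset\Omega$ one can take $v=\alpha\chi_{B'}\varphi'-\beta\chi_{B''}\varphi''$ with $\varphi',\varphi''$ smooth bumps and $\alpha,\beta>0$ chosen so that the $L^{q-1}$-type masses balance, giving a nonzero admissible competitor.

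Next I would take a minimizing sequence $v_n\in W^{1,p}_0(\Omega)$ with $\int_\Omega|v_n|^{q-2}v_n\,dx=0$, normalized so that $\norm{v_n}_{L^q(\Omega)}=1$; then $\norm{\nabla v_n}_{L^p(\Omega)}\to\lambda^{p,q}(\Omega)$ is bounded, so by Poincar\'e the sequence is bounded in $W^{1,p}_0(\Omega)$. Passing to a subsequence, $v_n\rightharpoonup u$ weakly in $W^{1,p}_0(\Omega)$; since $1<p<\infty$ and the embedding $W^{1,p}_0(\Omega)\hookrightarrow L^q(\Omega)$ is \emph{compact} under \eqref{hypotheses_pqN} (Rellich--Kondrachov, with strict inequality $q<p^*$, or the compact Morrey embedding for $p>N$), we get $v_n\to u$ strongly in $L^q(\Omega)$, hence $\norm{u}_{L^q(\Omega)}=1$ so $u\neq0$. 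Strong $L^q$ convergence also gives, via the continuity of $t\mapsto|t|^{q-2}t$ and the bound $\bigl||v_n|^{q-2}v_n\bigr|=|v_n|^{q-1}$ being bounded in $L^{q/(q-1)}(\Omega)$ (uniformly integrable by Vitali, or using strong $L^q$ convergence and the inequality $||a|^{q-2}a-|b|^{q-2}b|\le C(|a|^{q-2}+|b|^{q-2})|a-b|$ with H\"older), that $\int_\Omega|v_n|^{q-2}v_n\,dx\to\int_\Omega|u|^{q-2}u\,dx$, so the limit satisfies the constraint $\int_\Omega|u|^{q-2}u\,dx=0$. Finally, by weak lower semicontinuity of the $L^p$-norm of the gradient, $\norm{\nabla u}_{L^p(\Omega)}\le\liminf_n\norm{\nabla v_n}_{L^p(\Omega)}=\lambda^{p,q}(\Omega)$, while $u$ is admissible so the reverse inequality $\norm{\nabla u}_{L^p(\Omega)}/\norm{u}_{L^q(\Omega)}\ge\lambda^{p,q}(\Omega)$ holds; therefore $u$ is a minimizer.

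For boundedness of $u$, I would invoke a standard De Giorgi--Stampacchia truncation / Moser iteration argument applied to the Euler--Lagrange equation that $u$ satisfies (a $p$-Laplacian-type equation with a right-hand side involving $|u|^{q-2}u$ and $|u|^{q-2}$ times the Lagrange multiplier), the subcriticality $q<p^*$ being exactly what makes the standard $L^\infty$ bound work; alternatively one can truncate directly in the variational problem, showing that replacing $u$ by a truncation does not increase the quotient while still allowing the constraint to be restored, though the nonlocal constraint makes the direct truncation slightly delicate.

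The main obstacle is handling the nonlocal constraint throughout: one must check the constraint set is nonempty, that it passes to the weak limit (which is where compactness of the embedding into $L^q$, and hence the subcriticality hypothesis \eqref{hypotheses_pqN}, is essential — without it the constraint functional is only weakly continuous if the embedding is compact), and, for the boundedness claim, that truncation arguments are compatible with the constraint. I expect the passage of the constraint to the limit, via compact Sobolev embedding plus uniform integrability of $|v_n|^{q-1}$, to be the technical heart of the argument, and everything else to be routine application of the direct method.
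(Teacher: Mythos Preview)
Your proposal is correct and follows essentially the same approach as the paper: direct method with compact Sobolev embedding to pass the constraint to the limit, then derivation of the Euler--Lagrange equation (with the extra Lagrange multiplier term $\mu_0(q-1)|u|^{q-2}$ coming from the nonlocal constraint) and standard elliptic regularity for the $L^\infty$ bound. The only cosmetic differences are that the paper normalizes $\norm{\nabla u_n}_{L^p}=1$ rather than $\norm{v_n}_{L^q}=1$ and obtains positivity of $\lambda^{p,q}(\Omega)$ as a byproduct of the limit argument rather than a priori from the Sobolev inequality as you do; the paper also spells out in detail the implicit function theorem argument needed to derive the Euler equation under the constraint, which you correctly anticipate but leave implicit.
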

\begin{proof}
Let
$$\displaystyle H_{\frac1n}(v)=
\norm{\nabla v}^p_{L^p(\Omega)}-\left([\lambda^{p,q}(\Omega)]^p+\frac 1n\right)\norm{v}^p_{L^q(\Omega)}$$
with $n\in \N$, and
$$
\displaystyle G(v)=\int_{\Omega} |v|^{q-2}v\,dx.
$$
By definition of infimum, for every $n$ there exists $u_n$
such that
$$
\displaystyle \int_{\Omega}|u_n|^{q-2}u_n\,dx=0\,,\,\,\,\,\,\,\,\,\,\,\,\,H_{\frac 1n}(u_n)<0\,.
$$ Without loss of generality we can assume that $\norm{\nabla u_n}_{L^p(\Omega)}=1.$
By Poincar\'e inequality,
$\norm{u_n}_{W^{1,p}(\Omega)}$ is uniformly bounded. Since $p>1$, up to a subsequence,
$u_n$ converges weakly to some  $u \in W^{1,p}_0(\Omega)$. By hypotheses
 (\ref{hypotheses_pqN}) on $p$ and $q$,
$u_n\to v$ in $L^q(\Omega)$ and then
$$
\displaystyle \int_{\Omega}|u|^{q-2}u\,dx=0.
$$
This implies that
$$
\norm{\nabla u}^p_{L^p(\Omega)}-[\lambda^{p,q}(\Omega)]^p\norm{u}^p_{L^q(\Omega)}\leq 0.
$$
By definition of $\lambda^{p,q}(\Omega)$, necessarily we have
$$
\norm{\nabla u}^p_{L^p(\Omega)}-[\lambda^{p,q}(\Omega)]^p\norm{u}^p_{L^q(\Omega)}=0.
$$
To prove that $u \neq 0$ and $\lambda^{p,q}(\Omega)>0$, it is sufficient to pass to the limit in
$H_n(u_n)<0$ to get
$$
1\leq [\lambda^{p,q}(\Omega)]^p\norm{u}^p_{L^q(\Omega)}.
$$
We are now going to prove that $u$ is bounded.
For $\varphi, \theta \in C^{\infty}_0(\Omega)$, let
$$
\Phi(\varepsilon, t)=
\int\limits_{\Omega}|\nabla u\,dx + \varepsilon \nabla \varphi+t\nabla \theta|^p\,dx
-[\lambda^{p,q}(\Omega)]^p
\left[\int\limits_{\Omega}|u + \varepsilon \varphi+t\theta|^q\,dx
\right]^{\frac pq}\,.
$$
Let $\theta$ be such that $$
\displaystyle (q-1)\int_{\Omega}|u|^{q-2}\theta\,dx=1,$$
if such $\theta$ does not exist one would have $|u|^{q-2}=0$ which is a contradiction. Then, set
$$
\psi(\varepsilon, t)=
\int\limits_{\Omega}|u + \varepsilon \varphi+t\theta|^{q-2}(u + \varepsilon \varphi+t\theta)\,dx\,.
$$
The hypotheses on $\theta$ imply that $\psi_t(0,0)=1$.
By the implicit function theorem applied to $\psi$, there exists a function $\tau$ such that
$\psi(\varepsilon, \tau(\varepsilon))$=0 and $\tau'(0)=-\psi_{\varepsilon}(0,0)$. Since $(0,0)$ is a minimizer for $\Phi$, we deduce that
\begin{equation}\label{implicit_function_thm_primo}
\Phi_{\varepsilon}(0,0)+\Phi_t(0,0)\tau'(0)=\Phi_{\varepsilon}(0,0)-\Phi_t(0,0)\psi_{\varepsilon}(0,0)=0\,.
\end{equation}

By explicit calculations we have
$$
\Phi_{\varepsilon}(0,0)=
p\int\limits_{\Omega}|\nabla u|^{p-2}\nabla u\cdot \nabla \varphi\,dx
-[\lambda^{p,q}(\Omega)]^p p
\norm{u}_{L^q(\Omega)}^{p-q}
\int\limits_{\Omega}|u|^{q-2}u\varphi\,dx;
$$
$$
\Phi_{t}(0,0)=
p\int\limits_{\Omega}|\nabla u|^{p-2}\nabla u\cdot \nabla \theta\,dx
-[\lambda^{p,q}(\Omega)]^pp
\norm{u}_{L^q(\Omega)}^{p-q}
\int\limits_{\Omega}|u|^{q-2}u\theta\,dx;
$$
$$
\psi_{\varepsilon}(0,0)=
(q-1)\int\limits_{\Omega}|u|^{q-2} \varphi\,dx\,,
$$
the equation (\ref{implicit_function_thm_primo})
is indeed equivalent to
$$
\int\limits_{\Omega}|\nabla u|^{p-2}\nabla u\cdot \nabla \varphi\,dx
-[\lambda^{p,q}(\Omega)]^p \norm{u}_{L^q(\Omega)}^{p-q}
\int\limits_{\Omega}|u|^{q-2}u\varphi\,dx
=
\mu_0
(q-1)\int\limits_{\Omega}|u|^{q-2} \varphi\,dx
$$
with
$$
\mu_0=\int\limits_{\Omega}|\nabla u|^{p-2}\nabla u\cdot \nabla \theta\,dx
-[\lambda^{p,q}(\Omega)]^p
\norm{u}_{L^q(\Omega)}^{p-q}
\int\limits_{\Omega}|u|^{q-2}u\theta\,dx
\,.
$$
By standard regularity results on elliptic equations (see $\S$ 5 of Chapter 2 in \cite{Lad-Ural}) we deduce that $u$ is bounded.
\end{proof}

Let now $u\in W^{1,p}_0(\Omega)$ be such that
$$
\displaystyle\lambda^{p,q}(\Omega)=\frac{\norm{\nabla u}_{L^p(\Omega)}}{\norm{u}_{L^q(\Omega)}}\,\;\;\textrm{and}\;\;\; \int_{\Omega}|u|^{q-2}u\,dx=0
$$
and set $$\Omega_{+}=\{x \in \Omega: u(x)>0\}\,,\quad \Omega_{-}=\{x \in \Omega: u(x)<0\}.$$

Our aim is to prove that we can reduce to the case of two balls. We will use a technique used in \cite{FreHen04-000} based on the Schwarz rearrangement. Here we recall just the definition and the properties that we will need in the proof. For more details on rearrangement techniques we refer to \cite{Hen06-000} and \cite{Kawohl:1985p448}.

\begin{defn}
For a measurable set $\omega\subset \R^N$, we denote by $\omega^*$ the ball of same measure as $\omega$.
If $u$ is a non-negative measurable function defined on a measurable set $\Omega$ and $u=0$ on $\partial \Omega$, let
$$\Omega(c)=\{x \in \Omega: u(x)\geq c\}.$$ The Schwarz rearrangement of $u$ is the function $u^*$ defined on $\Omega^*$ by
$$u^*=\sup \{c: x \in \Omega(c)^*\}.$$
\end{defn}
The next theorem summarizes some of the main properties of the Schwarz symmetrization.
\begin{thm}\label{symm}
Let $u$ be a non-negative measurable function defined on a measurable set $\Omega$ with $u=0$ on $\partial \Omega$. Then
\begin{enumerate}
\item
$u^*$ is a radially symmetric non-increasing function of $|x|$;
\item for any measurable function $\psi: \R^+ \to \R$
$$\displaystyle \int_{\Omega}\psi(u)\,dx=\int_{\Omega^*}\psi(u^*)\,dx \;\;;$$ \item
if $u \in W^{1,p}_0(\Omega)$, then $u^* \in W^{1,p}_0(\Omega^*)$
and
$$
\displaystyle \int_{\Omega}|\nabla u|^p\,dx\geq \int_{\Omega^*}|\nabla u^*|^p\,dx.
$$
\end{enumerate}
\end{thm}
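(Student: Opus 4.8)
The plan is to establish the three assertions in turn, each time exploiting the fact that $u$ and $u^{*}$ are \emph{equimeasurable}, i.e.\ they share the same distribution function $\mu(t):=|\{x\in\Omega: u(x)>t\}|$. Assertion (1) is immediate from the definition: the sets $\Omega(c)^{*}$ are balls centred at the origin which shrink as $c$ increases, and $u^{*}(x)=\sup\{c: x\in\Omega(c)^{*}\}$, so each superlevel set $\{u^{*}\ge c\}$ is again a centred ball and $u^{*}$ is a non-increasing function of $|x|$; moreover $|\{u^{*}\ge c\}|=|\Omega(c)^{*}|=|\Omega(c)|=|\{u\ge c\}|$, which is the equimeasurability used below.

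For assertion (2) I would first reduce to $\psi\ge 0$ by splitting $\psi=\psi^{+}-\psi^{-}$. For $\psi\ge 0$ Borel, the layer-cake identity $\psi(s)=\int_{0}^{\infty}\chi_{\{\psi>\sigma\}}(s)\,d\sigma$ together with Fubini reduces the claim to $|\{x\in\Omega:\psi(u(x))>\sigma\}|=|\{x\in\Omega^{*}:\psi(u^{*}(x))>\sigma\}|$ for a.e.\ $\sigma$, that is, to the equimeasurability of $\psi\circ u$ and $\psi\circ u^{*}$; since $\{\psi>\sigma\}$ is, up to a null set, a countable union of intervals, this follows from the equimeasurability of $u$ and $u^{*}$ (alternatively one simply quotes it from \cite{Kawohl:1985p448} or \cite{Hen06-000}). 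In particular, taking $\psi(t)=|t|^{q}$ and $\psi(t)=|t|^{q-2}t$ gives $\|u^{*}\|_{L^{q}(\Omega^{*})}=\|u\|_{L^{q}(\Omega)}$ and shows that the symmetrization preserves the vanishing-mean constraint, which is what the application needs.

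The substance of the theorem, and the step I expect to be the main obstacle, is assertion (3) — the P\'olya--Szeg\H{o} inequality. Write $P(E)$ for the perimeter of $E$. I would prove the inequality first for $u$ Lipschitz and then pass to general $u\in W^{1,p}_{0}(\Omega)$ by approximation: if $u_{n}\in C^{\infty}_{c}(\Omega)$ with $u_{n}\to u$ in $W^{1,p}$, then $u_{n}^{*}\to u^{*}$ in $L^{p}$ (rearrangement is a contraction in $L^{p}$) while $\|\nabla u_{n}^{*}\|_{L^{p}}\le\|\nabla u_{n}\|_{L^{p}}$ stays bounded, so $u^{*}\in W^{1,p}_{0}(\Omega^{*})$ and the inequality survives by weak lower semicontinuity of the Dirichlet energy. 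For $u$ Lipschitz, Sard's theorem makes a.e.\ $t$ a regular value and the coarea formula gives
\[
\int_{\Omega}|\nabla u|^{p}\,dx=\int_{0}^{\infty}\Big(\int_{\{u=t\}}|\nabla u|^{p-1}\,d\Haus^{N-1}\Big)\,dt,\qquad -\mu'(t)=\int_{\{u=t\}}\frac{d\Haus^{N-1}}{|\nabla u|}.
\]
Applying H\"older on $\{u=t\}$ to the splitting $1=|\nabla u|^{(p-1)/p}\,|\nabla u|^{-(p-1)/p}$ yields
\[
P(\{u>t\})^{p}\le\Big(\int_{\{u=t\}}|\nabla u|^{p-1}\,d\Haus^{N-1}\Big)\,(-\mu'(t))^{p-1}.
\]
Now the isoperimetric inequality gives $P(\{u>t\})\ge P(\{u>t\}^{*})=P(\{u^{*}>t\})$, while for the radial profile $u^{*}$ the same H\"older inequality is an \emph{equality}, since $|\nabla u^{*}|$ is constant on each sphere $\{u^{*}=t\}$ and $-\mu'$ is unchanged by equimeasurability; hence
\[
\int_{\{u=t\}}|\nabla u|^{p-1}\,d\Haus^{N-1}\ge\frac{P(\{u^{*}>t\})^{p}}{(-\mu'(t))^{p-1}}=\int_{\{u^{*}=t\}}|\nabla u^{*}|^{p-1}\,d\Haus^{N-1}.
\]
Integrating in $t$ and undoing the coarea formula on $\Omega^{*}$ gives $\int_{\Omega}|\nabla u|^{p}\,dx\ge\int_{\Omega^{*}}|\nabla u^{*}|^{p}\,dx$. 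The delicate points to handle carefully — the contribution of critical values (where $|\nabla u|$ vanishes on a positive-measure portion of a level set) and the Lipschitz-to-Sobolev passage — are classical, and in the paper we therefore simply refer to \cite{Kawohl:1985p448} and \cite{Hen06-000}.
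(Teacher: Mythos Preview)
The paper does not prove this theorem at all: it is stated as a summary of ``some of the main properties of the Schwarz symmetrization'' and the reader is referred to \cite{Kawohl:1985p448} and \cite{Hen06-000} for details. Your proposal, by contrast, supplies the standard argument --- equimeasurability from the definition, the layer-cake/Fubini reduction for (2), and the coarea/H\"older/isoperimetric proof of the P\'olya--Szeg\H{o} inequality for (3) with a Lipschitz-to-Sobolev density step. This is correct and is exactly the kind of proof one finds in the cited references, so in spirit your write-up matches what the paper outsources.

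One small slip worth fixing: in part (2) you assert that $\{\psi>\sigma\}$ is ``up to a null set, a countable union of intervals'', but this is false for a general Borel (let alone Lebesgue) measurable $\psi$. You do not need it: equimeasurability of $u$ and $u^{*}$ in the sense $|\{u>t\}|=|\{u^{*}>t\}|$ for all $t$ already implies that the pushforward measures $u_{\#}\mathcal{L}^{N}$ and $u^{*}_{\#}\mathcal{L}^{N}$ agree on all Borel sets, hence $|\{u\in B\}|=|\{u^{*}\in B\}|$ for every Borel $B\subset\mathbb{R}$, and in particular for $B=\{\psi>\sigma\}$. With that correction your argument is complete.
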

Using the Schwarz symmetrization and suitable constrained variations we are now able to reduce our problem to the ``radial'' one. Indeed we have the following theorem.
\begin{thm}\label{thmpremiereinegalite}
Let $B_{\pm}$ be a ball of same measure as
$|\Omega_{\pm}|$. Then
$\lambda^{p,q}(\Omega)\geq \lambda^{p,q}(B_+\cup B_-)$.
\end{thm}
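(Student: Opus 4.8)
\textbf{Proof plan for Theorem \ref{thmpremiereinegalite}.}

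The plan is to take a minimizer $u$ for $\lambda^{p,q}(\Omega)$, split it into its positive and negative parts $u^+=u\chi_{\Omega_+}$ and $u^-=-u\chi_{\Omega_-}$, and Schwarz-rearrange each of them separately on the respective balls $B_+$ and $B_-$. Set $v^+=(u^+)^*$ and $v^-=(u^-)^*$, both radial non-increasing functions on $B_+$ and $B_-$ respectively, and extend them by zero. The natural candidate competitor will be $w=v^+\chi_{B_+}-\alpha v^-\chi_{B_-}$ for a suitable scalar $\alpha>0$ chosen so as to restore the vanishing-mean constraint $\int |w|^{q-2}w\,dx=0$. By property (2) of Theorem \ref{symm}, $\int_{B_+}|v^+|^{q-2}v^+\,dx=\int_{\Omega_+}|u^+|^{q-2}u^+\,dx$ and similarly for the negative part, so the constraint $\int_\Omega |u|^{q-2}u\,dx=0$ reads $\int_{B_+}(v^+)^{q-1}=\int_{B_-}(v^-)^{q-1}=:m$; hence in fact $\alpha=1$ already works, and $\int_{B_+\cup B_-}|w|^{q-2}w\,dx=0$. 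Thus $w$ is admissible in the variational problem defining $\lambda^{p,q}(B_+\cup B_-)$, provided $w\neq 0$, which holds since $m>0$ (else $u^\pm\equiv 0$, forcing $u\equiv 0$).

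Next I would compare the Rayleigh quotients. For the numerator, property (3) of Theorem \ref{symm} applied on $\Omega_+$ and $\Omega_-$ separately gives $\int_{\Omega_+}|\nabla u^+|^p\,dx\geq \int_{B_+}|\nabla v^+|^p\,dx$ and likewise on the negative side; since $\Omega_+$ and $\Omega_-$ are disjoint and $\nabla u=\nabla u^+$ a.e.\ on $\Omega_+$, $\nabla u=-\nabla u^-$ a.e.\ on $\Omega_-$, and $\nabla u=0$ a.e.\ on $\{u=0\}$, we obtain
\[
\norm{\nabla u}_{L^p(\Omega)}^p=\int_{\Omega_+}|\nabla u^+|^p+\int_{\Omega_-}|\nabla u^-|^p
\geq \int_{B_+}|\nabla v^+|^p+\int_{B_-}|\nabla v^-|^p=\norm{\nabla w}_{L^p(B_+\cup B_-)}^p.
\]
For the denominator, property (2) with $\psi(t)=|t|^q$ gives $\int_{\Omega_+}|u^+|^q=\int_{B_+}|v^+|^q$ and $\int_{\Omega_-}|u^-|^q=\int_{B_-}|v^-|^q$, so $\norm{u}_{L^q(\Omega)}^q=\norm{w}_{L^q(B_+\cup B_-)}^q$. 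Combining, the Rayleigh quotient does not increase, and therefore
\[
\lambda^{p,q}(\Omega)=\frac{\norm{\nabla u}_{L^p(\Omega)}}{\norm{u}_{L^q(\Omega)}}\geq \frac{\norm{\nabla w}_{L^p(B_+\cup B_-)}}{\norm{w}_{L^q(B_+\cup B_-)}}\geq \lambda^{p,q}(B_+\cup B_-),
\]
the last inequality because $w$ is an admissible test function.

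The one genuinely delicate point is the admissibility of $w$, specifically that the zero-mean constraint really is preserved under separate rearrangement — this is where the phrase ``suitable constrained variations'' in the text comes in. The subtlety is that one must symmetrize $u^+$ and $u^-$ on the \emph{prescribed} balls $B_+$ and $B_-$ (whose measures match $|\Omega_+|$, $|\Omega_-|$) rather than on a single ball, and check that $W^{1,p}_0$ membership and the boundary condition $w=0$ on $\partial(B_+\cup B_-)$ are retained; property (3) of Theorem \ref{symm} delivers exactly this on each piece. A minor technicality is handling the level set $\{u=0\}$: one should note $|\{u=0\}\cap\Omega|$ may be positive, but this contributes nothing to either integral and $\nabla u=0$ there a.e., so the decompositions above are exact. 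Modulo these checks, the proof reduces to the bookkeeping displayed above.
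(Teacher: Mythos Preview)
Your proof is correct and follows the same core idea as the paper: symmetrize $u^+$ and $u^-$ separately onto $B_+$ and $B_-$, observe that equimeasurability preserves the constraint $\int |w|^{q-2}w=0$, and then compare Rayleigh quotients via P\'olya--Szeg\H{o}. Where you differ is in the execution: the paper introduces an auxiliary two-function minimization
\[
\lambda^{*}=\inf_{A}\frac{\norm{\nabla f}^p_{L^p(B_+)}+\norm{\nabla g}^p_{L^p(B_-)}}{\bigl[\norm{f}^q_{L^q(B_+)}+\norm{g}^q_{L^q(B_-)}\bigr]^{p/q}},
\]
derives its Euler--Lagrange equation via the implicit function theorem, and only then concludes $\lambda^{*}\geq [\lambda^{p,q}(B_+\cup B_-)]^p$ by testing the equation against the minimizer. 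You bypass this entirely by noting that the symmetrized function $w=v^{+}\chi_{B_+}-v^{-}\chi_{B_-}$ is itself directly admissible for $\lambda^{p,q}(B_+\cup B_-)$. Your route is more elementary and avoids the Euler equation machinery altogether; the paper's detour through $\lambda^{*}$ and its first-order conditions is not actually needed for this particular inequality (the Euler equation that matters for the rest of the argument is derived separately in Theorem~\ref{idee_daco_gangbo_subia}).
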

\begin{proof}
Let $u_+=u\lfloor_{\Omega_+}$ and  $u_-=-u\lfloor_{\Omega_-}$.
By symmetrizing $u_{+}$ and $u_{-}$ respectivelly, by the properties of Schwarz rearrangement (cfr. Theorem \ref{symm}) we can write
$$
[\lambda^{p,q}(\Omega)]^p
\geq
\frac{\norm{\nabla u_+^*}^p_{L^p(B_+)}+\norm{\nabla u_-^*}^p_{L^p(B_-)}}{\left[\norm{u_+^*}^q_{L^q(B_+)}+\norm{u_-^*}^q_{L^q(B_-)}\right]^{\frac pq}}\,.
$$
Moreover, by equimeasurability ensured by Theorem \ref{symm}.(2), using the volume constraint, we deduce that
$$
0=\int\limits_{\Omega_+}|u_+|^{q-2}u_+\,dx-\int\limits_{\Omega_-}|u_-|^{q-2}u_-\,dx
=\int\limits_{B_+}|u_+^*|^{q-2}u^*_+\,dx-\int\limits_{B_-}|u^*_-|^{q-2}u^*_-\,dx\,.
$$
If we set
$$
\lambda^{*}
=
\inf_A\frac{\norm{\nabla f}^p_{L^p(B_+)}+\norm{\nabla g}^p_{L^p(B_-)}}{\left[\norm{f}^q_{L^q(B_+)}+\norm{g}^q_{L^q(B_-)}\right]^{\frac pq}}
$$
where $A$ is defined by
$$
A=\left\{
(f,g)\in W^{1,p}_0(B_+)\times W^{1,p}_0(B_-): \int_{B_+}|f|^{q-2}f=\int_{B_-}|g|^{q-2}g
\right\}\,,
$$
we clearly have
\begin{equation}\label{prima_stima}
[\lambda^{p,q}(\Omega)]^p\geq \lambda^*\,.
\end{equation}
It is easily seen that $\lambda^*$ is attained in $(f_+, f_-)$, with $f_+, f_-\geq 0$. Without loss of generality
we can moreover assume that
\begin{equation}\label{cons}
\displaystyle \int\limits_{B_+}|f_+|^{q}\,dx
+
\int\limits_{B_-}|f_-|^{q}\,dx=1.
\end{equation}
For $\varphi_{\pm}, \theta_{\pm} \in C^{\infty}_0(B_{\pm})$, define

\begin{equation*}
\begin{split}
\Phi(\varepsilon, t)= & \int\limits_{B_+}|\nabla f_+\,dx + \varepsilon \nabla \varphi_++t\nabla \theta_+|^p\,dx
+
\int\limits_{B_-}|\nabla f_-\,dx + \varepsilon \nabla \varphi_-+t\nabla \theta_-|^p\,dx +
\\
 & -\lambda^*\left[
\int\limits_{B_+}|f_+ + \varepsilon \varphi_++t\theta_+|^q\,dx
+
\int\limits_{B_-}|f_- + \varepsilon \varphi_-+t\theta_-|^q\,dx\right]^{\frac pq}\,.
\end{split}
\end{equation*}
Let $(\theta_+,\theta_-)$ be such that
$$
\displaystyle (q-1)\int_{B_+}|f_+|^{q-2}\theta_+\,dx-(q-1)\int_{B_-}|f_-|^{q-2}\theta_-\,dx=1\,.
$$
Such choice of $(\theta_+,\theta_-)$ is possible, since, if not, one would have $|f_+|^{q-2}=|f_-|^{q-2}=0$, that contradicts \eqref{cons}.
If we define the functional
\begin{equation*}
\begin{split}
\psi(\varepsilon, t)= &
\int\limits_{B_+}|f_+ + \varepsilon \varphi_++t\theta_+|^{q-2}(f_+ + \varepsilon \varphi_++t\theta_+)\,dx + \\
& - \int\limits_{B_-}|f_- + \varepsilon \varphi_-+t\theta_-|^{q-2}(f_- + \varepsilon \varphi_-+t\theta_-)\,dx\,,
\end{split}
\end{equation*}
the hypotheses on $(\theta_+,\theta_-)$ imply that $\psi_t(0,0)=1$.
By the implicit function theorem applied to $\psi$, there exists a function $\tau$ such that
$\psi(\varepsilon, \tau(\varepsilon))$=0 and $\tau'(0)=-\psi_{\varepsilon}(0,0)$. Since $(0,0)$ is a minimizer for $\Phi$,
$$
\Phi_{\varepsilon}(0,0)+\Phi_t(0,0)\tau'(0)=\Phi_{\varepsilon}(0,0)-\Phi_t(0,0)\psi_{\varepsilon}(0,0)=0\,,
$$
that is,
\begin{equation*}
\begin{split}
\mu_0 (q-1)&\left[\int\limits_{B_+}|f_+|^{q-2} \varphi_+\,dx
-\int\limits_{B_-}|f_-|^{q-2} \varphi_-\,dx\right] =
\int\limits_{B_+}|\nabla f_+|^{p-2}[\nabla f_+ ]\cdot \nabla \varphi_+\,dx \, +
\\
 & + \int\limits_{B_-}|\nabla f_-|^{p-2}\nabla f_-\cdot \nabla \varphi_-\,dx -\lambda^* \left[
\int\limits_{B_+}|f_+|^{q-2}f_+\varphi_+\,dx +
\int\limits_{B_-}|f_-|^{q-2}f_-\varphi_-\,dx
\right]
\end{split}
\end{equation*}
with
\begin{equation*}
\begin{split}
\mu_0 = & \int\limits_{B_+}|\nabla f_+|^{p-2}\nabla f_+\cdot \nabla \theta_+\,dx
+
\int\limits_{B_-}|\nabla f_-|^{p-2}\nabla f_-\cdot \nabla \theta_-\,dx \,+\\
& -\lambda^*
\left[
\int\limits_{B_+}|f_+|^{q-2}f_+\theta_+\,dx
+
\int\limits_{B_-}|f_-|^{q-2}f_-\theta_-\,dx
\right]\,.
\end{split}
\end{equation*}
It follows that
$
w=f_+\chi_{B_+}-f_-\chi_{B_-}
$
satisfies on $B_+ \cup B_-$ the equation
\begin{equation}\label{premiere_equation_euler}
-{\rm div}(|\nabla w|^{p-2}\nabla w)=\lambda^*|w|^{q-2}w+\mu_0(q-1)|w|^{q-2}\,.
\end{equation}
Now we observe that multiplying \eqref{premiere_equation_euler} by $w$, one has
\begin{equation*}
\begin{split}
\frac{\displaystyle\int\limits_{B_+\cup B_-}|\nabla w|^p\,dx}{\displaystyle\int\limits_{B_+\cup B_-}|w|^q\,dx} & =\lambda^*\geq
\inf
\left\{
{\int\limits_{B_+\cup B_-}|\nabla v|^p\,dx}\,, v: \int\limits_{B_+\cup B_-}|v|^{q-2}v\,dx=0, \norm{v}_{L^q(B_+\cup B_-)}=1
\right\}
\\
& = [\lambda^{p,q}(B_+\cup B_-)]^p\,.
\end{split}
\end{equation*}
The above inequality and (\ref{prima_stima}) imply that $\lambda^{p,q}(\Omega)\geq \lambda^{p,q}(B_+\cup B_-)$.
\end{proof}

We are now going to write the Euler equation for $\lambda^{p,q}(\Omega)$ in the case where $\Omega$ is the union of two disjoint balls. We will make use of a technique introduced in \cite{DacGanSub92-000} to carefully choose the variations.


\begin{thm}\label{idee_daco_gangbo_subia}
Let $\Omega=B_1\cup B_2$ where $B_1$ and $B_2$ are two disjoint balls.
Let $u\in W^{1,p}_{0}(\Omega)$  be a bounded function such that $\displaystyle \int\limits_{\Omega}|u|^{q-2}u\,dx=0$
and
$
\displaystyle \lambda^{p,q}(\Omega)=\frac{\norm{\nabla u}_{L^p(\Omega)}}{\norm{u}_{L^q(\Omega)}}\,.
$
Then
\begin{equation}\label{equazione_eulero_dgs}
-{\rm div}(|\nabla u|^{p-2}\nabla u)=[\lambda^{p,q}(\Omega)]^p\norm{u}_{L^q(\Omega)}^{p-q}|u|^{q-2}u\,.
\end{equation}
\end{thm}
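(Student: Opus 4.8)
The plan is to imitate the constrained–variation scheme already used for Theorem~\ref{thmpremiereinegalite} (a first variation along an admissible curve produced by the implicit function theorem), but now feeding it the special variations that a ball makes available, in the spirit of \cite{DacGanSub92-000}.

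First I would normalise $\norm{u}_{L^q(\Omega)}=1$ and, writing $\Lambda:=[\lambda^{p,q}(\Omega)]^p$, recall that the argument run in the proof of Theorem~\ref{thmpremiereinegalite} produces a real number $\mu_0$ with
\[
-\diverg(|\nabla u|^{p-2}\nabla u)=\Lambda\,|u|^{q-2}u+\mu_0(q-1)|u|^{q-2}\qquad\text{in }B_1\cup B_2 ,
\]
so that the assertion is exactly $\mu_0=0$. Next I would put $u$ in a convenient form. A minimiser has exactly two nodal components (the usual comparison between nodal domains), and since $B_1,B_2$ are the connected components of $\Omega$ each component lies in one ball; applying Schwarz symmetrisation inside each ball (Theorem~\ref{symm}, which lowers $\norm{\nabla\cdot}_{L^p}$ and preserves both $\norm{\cdot}_{L^q}$ and the constraint) we may assume $u=u_1\chi_{B_1}-u_2\chi_{B_2}$ with $u_1,u_2$ positive, radial and nonincreasing on $B_i=B(x_i,r_i)$ (the degenerate case where one $u_i$ vanishes identically is handled by repeating the reduction inside the surviving ball). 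In particular $|\nabla u_i|$ is constant on $\partial B_i$; call it $d_i:=|\partial u_i/\partial\nu_i|$. Restricting the Euler equation to $B_i$ gives $-\diverg(|\nabla u_i|^{p-2}\nabla u_i)=\Lambda u_i^{q-1}\pm\mu_0(q-1)u_i^{q-2}$ (sign $+$ on $B_1$, $-$ on $B_2$).

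Now the ``careful variations''. For $\rho_i\in(0,r_i]$ let $u_i^{\rho_i}$ be $u_i$ transported by the dilation of $B_i$ onto the concentric ball of radius $\rho_i$ and extended by $0$; then $\norm{\nabla u_i^{\rho_i}}_{L^p}^p=(\rho_i/r_i)^{N-p}\norm{\nabla u_i}_{L^p}^p$, $\norm{u_i^{\rho_i}}_{L^q}^q=(\rho_i/r_i)^{N}\norm{u_i}_{L^q}^q$ and $\int_{B_i}(u_i^{\rho_i})^{q-1}=(\rho_i/r_i)^{N}\int_{B_i}u_i^{q-1}$. Inserting amplitudes $a_i>0$, the function $v=a_1u_1^{\rho_1}\chi_{B_1}-a_2u_2^{\rho_2}\chi_{B_2}$ is admissible whenever $a_1^{q-1}\rho_1^{N}=a_2^{q-1}\rho_2^{N}$ (the fixed factors $r_i^{-N}\int_{B_i}u_i^{q-1}$ being equal by the constraint on $u$), and on this family $(\rho_1,\rho_2,a_1,a_2)=(r_1,r_2,1,1)$ minimises the Rayleigh quotient. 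Exactly as in the previous two proofs, the implicit function theorem turns the optimality conditions of this family into algebraic identities linking $\norm{\nabla u_i}_{L^p}^p$, $\norm{u_i}_{L^q}^q$, $\int_{B_i}u_i^{q-1}$, $d_i$, $\Lambda$ and $\mu_0$. To these I would adjoin the three ``elementary'' identities obtained by (i) testing the equation against $u_i$ (giving $\norm{\nabla u_i}_{L^p}^p=\Lambda\norm{u_i}_{L^q}^q\pm\mu_0(q-1)\int_{B_i}u_i^{q-1}$), (ii) integrating the equation over $B_i$ and using the divergence theorem with the radiality of $u_i$ (giving $d_i^{\,p-1}|\partial B_i|=\Lambda\int_{B_i}u_i^{q-1}\pm\mu_0(q-1)\int_{B_i}u_i^{q-2}$), and (iii) the Pohozaev identity obtained by multiplying the equation by $(x-x_i)\cdot\nabla u$ and integrating over $B_i$, where $(x-x_i)\cdot\nu=r_i$ and $|\nabla u_i|=d_i$ on $\partial B_i$.

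The conclusion would then come from eliminating the geometric unknowns $d_i,\norm{\nabla u_i}_{L^p}^p,\norm{u_i}_{L^q}^q$ among all these relations — using also $\sum_i\norm{\nabla u_i}_{L^p}^p=\Lambda$, $\sum_i\norm{u_i}_{L^q}^q=1$ and $\int_{B_1}u_1^{q-1}=\int_{B_2}u_2^{q-1}$ — and checking that what survives is $\mu_0\big(\int_{B_1}u_1^{q-2}\,dx+\int_{B_2}u_2^{q-2}\,dx\big)=0$, i.e. $\mu_0=0$, whence \eqref{equazione_eulero_dgs}. I expect the main obstacle to be precisely this elimination: one must verify that the assembled linear system is nondegenerate in $\mu_0$ under the standing hypothesis $q<p^{*}$ (equivalently $N-p-Np/q<0$, which is exactly what keeps the dilation family on the favourable side and, in the limiting critical case, would fail), and one must handle the fact that the dilations are one‑sided ($\rho_i\le r_i$): a priori their optimality conditions only yield inequalities, and the equality — hence $\mu_0=0$ — should be recovered only after combining them with the Pohozaev identities, which here play the role that the reflection symmetry of the optimal function plays in the one–dimensional argument of \cite{DacGanSub92-000}.
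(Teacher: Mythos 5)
Your plan stops exactly where the real work of this theorem begins. Reducing to the Euler equation with a multiplier, i.e.\ $-{\rm div}(|\nabla u|^{p-2}\nabla u)=\Lambda|u|^{q-2}u+\mu_0(q-1)|u|^{q-2}$, is indeed available (this is what the implicit-function-theorem variation in Lemma 2.1 and in Theorem \ref{thmpremiereinegalite} gives, cf.\ \eqref{premiere_equation_euler}), and the whole content of the statement is $\mu_0=0$. But you never actually prove that. Observe that testing the equation against $u_i$, integrating it over each ball, and the Pohozaev identity (with $G$ the primitive of the \emph{full} right-hand side) are identities satisfied by every solution of the $\mu_0$-equation, whatever the value of $\mu_0$; by themselves they cannot force $\mu_0=0$. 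The only genuinely variational input in your scheme is the dilation/amplitude family, and that family does not contain the direction which detects $\mu_0$, namely perturbation by constants: moreover, as you yourself note, the dilations are one-sided ($\rho_i\le r_i$), so they only produce inequalities, and the ``elimination'' that is supposed to leave $\mu_0\bigl(\int_{B_1}u_1^{q-2}\,dx+\int_{B_2}u_2^{q-2}\,dx\bigr)=0$ is nowhere exhibited nor its nondegeneracy checked. The paper's proof, following \cite{DacGanSub92-000}, uses precisely the missing direction: for $\varphi\in C^\infty_0(\Omega)$ one corrects $u+t\varphi$ by a constant $\beta_t$ chosen so that $\int_\Omega|u+t\varphi+\beta_t|^{q-2}(u+t\varphi+\beta_t)\,dx=0$, shows via the strict monotonicity of $s\mapsto|s|^{q-2}s$ that $c_t=\beta_t/t$ stays bounded along a sequence $t_n\to0$, and passes to the limit in the difference quotient of $F(v)=\int_\Omega|\nabla v|^p\,dx-[\lambda^{p,q}(\Omega)]^p\bigl[\int_\Omega|v|^q\,dx\bigr]^{p/q}$; the constant direction contributes nothing to the gradient term and contributes a multiple of $\int_\Omega|u|^{q-2}u\,dx=0$ to the $L^q$ term, so the multiplier term simply never appears and \eqref{equazione_eulero_dgs} follows.

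There is a second mismatch with the statement. The theorem concerns an arbitrary bounded minimizer $u$ on the \emph{given} balls $B_1,B_2$; your preliminary reduction replaces $u$ by nodal-domain-wise Schwarz symmetrizations (after an unproven claim that a minimizer has exactly two nodal domains, one per ball, which the paper never needs), so even if your elimination succeeded you would obtain \eqref{equazione_eulero_dgs} only for the modified function, possibly supported in smaller concentric balls, not for $u$ itself. The paper's argument requires no structural or symmetry information on $u$ beyond boundedness. Finally, a small slip in your family of variations: the admissibility condition is $a_1^{q-1}(\rho_1/r_1)^N=a_2^{q-1}(\rho_2/r_2)^N$, since the constraint on $u$ equalizes $\int_{B_i}u_i^{q-1}\,dx$ and not the averages $r_i^{-N}\int_{B_i}u_i^{q-1}\,dx$.
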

\begin{proof}
We set
$$
G(v)=\int_{\Omega} |v|^{q-2}v\,dx
\,,\,\,\,\,\,\,\,\,\,\,F(v)=\int_{\Omega} |\nabla v|^{p}\,dx-[\lambda^{p,q}(\Omega)]^p\left[\int_{\Omega} |v|^{q}\,dx\right]^{p/q}\,.
$$
Let $\varphi \in C^{\infty}_0(\Omega)$ and $t \in (0,1)$.
Let
$$
\begin{array}{cccc}
\Psi:& \R&\to& \R
\\
& \beta &\to & G(u+t\varphi +\beta)\,.
\end{array}
$$
Then $\Psi$ is continuous,  $\Psi(-1-\norm{u+t\varphi}_{L^{\infty}(\Omega)})<0$ and
$\Psi(1+\norm{u+t\varphi}_{L^{\infty}(\Omega)})>0$.
By continuity there exists $\beta_t \in \R$ such that
$\Psi(\beta_t)=G(u+t\varphi +\beta_t)=0$.

Let $t\in (0,1)$ fixed.
We set $\displaystyle c_t=\frac{\beta_t}{t}$. We are going to prove the existence of a sequence  $t_n\to 0$ such that
 $c_{t_n}$ has a finite limit as $n\to \infty$ (up to a subsequence).
If there exists a sequence $t_n\to 0$ and $x_{t_n} \in \Omega$ such that
$\varphi(x_{t_n})+c_{t_n}=0$, then we have the result, since $\varphi$ is bounded.
If there exists $\delta>0$ such that, for every $0<t< \delta$,
 $\varphi(x)+c_t\neq 0$ for every $x \in \Omega$, let us show that
 $\varphi(x)+c_t$ must change sign in $\Omega$. Otherwise, by the strict convexity of $s\to |s|^q$ (and then by the strict monotonicity of $s\to |s|^{q-2}s$) we should have
$$
\int_{{\Omega}}|u+t\varphi +\beta_t|^{q-2}(u+t\varphi +\beta_t)\,dx>
\int_{{\Omega}}|u|^{q-2}u\,dx
$$
or
$$
\int_{{\Omega}}|u+t\varphi +\beta_t|^{q-2}(u+t\varphi +\beta_t)\,dx<
\int_{{\Omega}}|u|^{q-2}u\,dx\,,
$$
that is,
$$
0=\int_{\Omega}|u+t\varphi +\beta_t|^{q-2}(u+t\varphi +\beta_t)\,dx>\int_{{\Omega}}|u|^{q-2}u\,dx=0
$$
or
$$
0=\int_{\Omega}|u+t\varphi +\beta_t|^{q-2}(u+t\varphi +\beta_t)\,dx<\int_{{\Omega}}|u|^{q-2}u\,dx=0
$$
which is a contradiction.

Then, for $0<t<\delta$,
on a subset of $\Omega$ one has $\varphi(x)+c_t>0$ and on its complement
$\varphi(x)+c_t< 0$. This implies that $|c_t|\leq \norm{\varphi}_{L^{\infty}(\Omega)}$.
Therefore there exists $0<t_n <\delta$ such that $t_n \to 0$ and
$c_{t_n} \to c$ as $n\to +\infty$.

We have
$$
<F'(u),\varphi>=p\int_{\Omega}|\nabla u|^{p-2}\nabla u \cdot \nabla \varphi\,dx-
p[\lambda^{p,q}(\Omega)]^p \norm{u}_{L^q(\Omega)}^{p-q}\int_{\Omega}|u|^{q-2}u\varphi\,dx\,.
$$
On the other hand,
$$
0\leq \lim_{n\to \infty}\frac{F(u+t_n(\varphi+c_{t_n}))-F(u)}{t_n}=
$$
$$
=<F'(u),\varphi>-c\,p\,[\lambda^{p,q}(\Omega)]^p \norm{u}_{L^q(\Omega)}^{p-q}
\int_{\Omega}|u|^{q-2}u\,dx
=<F'(u),\varphi>\,.
$$
The previous inequality implies that
$$
\int_{\Omega}|\nabla u|^{p-2}\nabla u \cdot \nabla \varphi\,dx=
[\lambda^{p,q}(\Omega)]^p \norm{u}_{L^q(\Omega)}^{p-q}\int_{\Omega}|u|^{q-2}u\varphi\,dx
$$
for every $\varphi \in C^{\infty}_0(\Omega).$

\end{proof}

\section{The shape optimization problem}

In this section we are going to find a geometrical necessary condition for a set $\Omega$ to be a minimizer of $\lambda^{p,q}(\Omega)$, where $\Omega$ is the union of two disjoint balls. We will exploit the derivative with respect to the domain of the set functional $\lambda^{p,q}(\Omega)$ and investigate an optimality condition, i.e. we will identify the domains with vanishing domain derivative. Here we briefly recall, for the reader's convenience, the ideas underlying the concept of domain derivative and we refer for instance to \cite{HenPie05-000} and  \cite{Simon:1980p300} for a detailed description of the theory and for further details on its applicability.

Roughly speaking the domain derivative can be understood in the following way. Let  $\Omega$ be a bounded smooth domain in $\Real^{n}$, $V:\Real^{n} \to \Real^{n}$ be a sufficiently smooth vector field, $t\geq 0$ and denote by $\Omega_{t}$ the image of $\Omega$ under the map $I+tV$, where $I$ stands for the identity. Let us consider the boundary value problem
\begin{equation}\label{BVP}
\left\{
\begin{array}{cl}
\mathcal{A}(t,u)=0 & \;\;\textrm{in }\Omega_{t} \\
u=0 &\;\; \textrm{on }\partial\Omega_{t}
\end{array}
\right.
\end{equation}
and an integral functional given by
\[
\mathcal{J}(t):=\int_{\Omega_{t}} C(u)\,dx,
\]
with $\mathcal{A}$ and $C$ are differential operators acting on a space of functions defined in $\Omega_{t}$. Under suitable regularity hypotheses, the function $t\to u_{t}$, that associates to $t$ the solution of  problem \eqref{BVP}, is differentiable and its derivative in zero, denoted by $\dot{u}:=u_{t}'(0)$, satisfies the following conditions
\begin{equation}\label{BVP-limit}
\left\{
\begin{array}{cl}
\partial_{t}\mathcal{A}(0,u_{0})+\partial_{t}\mathcal{A}(0,u_{0}) \dot{u}=0 & \textrm{in }\Omega
\\
\displaystyle \dot{u}=-\frac{\partial u_0}{\partial \nu} V\cdot \nu & \textrm{on }\partial\Omega
\end{array}
\right.
\end{equation}
where $\nu$ is the outward unit normal to $\partial \Omega$. Moreover, we can calculate the domain derivative for $t=0$ of the functional $\mathcal{J}$ in the direction $V$ as
\begin{equation}\label{Functional-Derived}
\mathcal{J}'(0)=\int_{\Omega} \partial_{u} C \, \dot{u} \,dx+\int_{\partial \Omega} C(u_{0}) \,V\cdot \nu\, d\mathcal{H}^{N-1}.
\end{equation}

The results of the previous section ensure us that we can restrict our study to the sets $\Omega=B_{1}\cup B_{2}$ where $B_1$ and $B_2$ are two disjoint balls of radius $R_1$ and $R_2$ respectively such that $|B_1\cup B_2|=\omega_N$, where $\omega_N$ is the measure of the unit ball in $\R^N$.
Let $u$ be the minimizer function realizing the value $\lambda^{p,q}(\Omega)$.
Using the Schwarz rearrangement as in Theorem \ref{thmpremiereinegalite}  we can assume that  $\lambda^{p,q}(B_1\cup B_2)$ is attained at a function
$u=u_1\chi_{B_1}-u_2\chi_{B_2}$,  with $u_{1}$ and $u_{2}$ non negative radial functions on $B_{1}$ and $B_{2}$ respectively.

For this kind of domains, by Theorem \ref{idee_daco_gangbo_subia}, $u$ satisfies  \eqref{equazione_eulero_dgs}. By scaling invariance, it is not restrictive to deal with solutions that satisfy the condition
\begin{equation}\label{vol-constr}
\int_{\Omega} |u|^{q}\,dx=1\,.
\end{equation}
Thus we are lead to consider $u$ satisfying \eqref{vol-constr}, the constraint
\begin{equation*}
\int_{\Omega} |u|^{q-2}v\,dx=0
\end{equation*}
and the Dirichlet eigenvalue problem
\begin{equation*}
\left\{
\begin{array}{cl}
-{\rm div}(|\nabla u|^{p-2}\nabla u)=[\lambda^{p,q}(\Omega)]^p |u|^{q-2}u & \;\;\textrm{in }\Omega \\
u=0 &\;\; \textrm{on }\partial\Omega\,.
\end{array}
\right.
\end{equation*}

Observe that in dimension 1, the minimizer of $\lambda^{p,q}((a,b))$ is an anti-symmetric function with respect to $(\frac{a+b}{2},0)$, as proved in \cite{DacGanSub92-000}.
Therefore in the sequel we will assume that $N\ge 2$.

Clearly an optimal set, i.e. a set that minimizes $\lambda^{p,q}(\Omega)$, will be a critical set with respect to the domain variations.
If we prove that the eigenvalue has a domain derivative $\dot{\lambda}^{p,q}(\Omega)$, which will be true if
 $\lambda^{p,q}(\Omega)$ is a simple eigenvalue, then $\dot{\lambda}^{p,q}(\Omega)=0$ (see for example \cite{HenPie05-000} for further details and proof of the differentiability of a simple eigenvalue).
 This motivates the next theorem.






\begin{thm}\label{simplicity}
Let $\Omega=B_1\cup B_2$, where $B_1$ and $B_2$ are two disjoint balls. Then $\lambda^{p,q}(\Omega)$ is a simple eigenvalue, i.e. there exists a unique function $u=u_1\chi_{B_1}-u_2\chi_{B_2}$, modulo a multiplicative constant, that realizes
 $$
 \lambda^{p,q}(\Omega)=\frac{\norm{\nabla u}_{L^p(\Omega)}}{\norm{u}_{L^q(\Omega)}}\,,\,\,\,\,\,\,\,\,\,\int_{\Omega}|u|^{q-2}u\,dx=0.
 $$
\end{thm}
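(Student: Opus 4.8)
The plan is to reduce the simplicity of $\lambda^{p,q}(\Omega)$ on $\Omega=B_1\cup B_2$ to the uniqueness, up to scaling, of the positive radial solution of the one-ball Dirichlet problem for the $p$-Laplacian with the $q$-power nonlinearity on the right-hand side. First I would recall that by the rearrangement argument of Theorem~\ref{thmpremiereinegalite} any minimizer may be taken of the form $u=u_1\chi_{B_1}-u_2\chi_{B_2}$ with $u_1,u_2\ge 0$ radial, and that by Theorem~\ref{idee_daco_gangbo_subia} such a $u$ solves
\[
-{\rm div}(|\nabla u|^{p-2}\nabla u)=[\lambda^{p,q}(\Omega)]^p\norm{u}_{L^q(\Omega)}^{p-q}|u|^{q-2}u
\]
with no Lagrange-multiplier term. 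Restricted to $B_1$, writing $\Lambda=[\lambda^{p,q}(\Omega)]^p\norm{u}_{L^q(\Omega)}^{p-q}$, the function $u_1$ is a nonnegative solution of $-{\rm div}(|\nabla u_1|^{p-2}\nabla u_1)=\Lambda\,u_1^{q-1}$ in $B_1$, $u_1=0$ on $\partial B_1$, and similarly for $u_2$ on $B_2$; the two are coupled only through the single scalar constraint $\int_{B_1}u_1^{q-1}\,dx=\int_{B_2}u_2^{q-1}\,dx$ and through the common constant $\Lambda$.

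The main step is a uniqueness statement: for a fixed ball $B$ of radius $R$ and fixed $\Lambda>0$, the boundary value problem $-{\rm div}(|\nabla v|^{p-2}\nabla v)=\Lambda\,v^{q-1}$ in $B$, $v=0$ on $\partial B$, $v\ge 0$, $v\not\equiv 0$, has a unique solution, and that solution is radial. Radial symmetry follows from the moving-plane / symmetrization machinery for the $p$-Laplacian (or, more elementarily for our purposes, from the fact that the minimizer is already radial after Schwarz symmetrization and any other solution can be compared to it). For uniqueness I would pass to the radial ODE: with $r=|x|$ and $v=v(r)$ one has $-(r^{N-1}|v'|^{p-2}v')'=\Lambda r^{N-1} v^{q-1}$, $v'(0)=0$, $v(R)=0$, and a shooting argument in the initial value $v(0)=a>0$ shows that the first zero $R(a)$ of the solution is a strictly monotone (indeed, by scaling $R(a)=a^{(q-p)/p}R(1)$ when $q\neq p$, and $R$ independent of the Lagrange structure) continuous function of $a$, so exactly one value of $a$ produces a solution vanishing precisely at $r=R$. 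Hence $u_1$ is the unique such profile on $B_1$ and $u_2$ the unique one on $B_2$, each determined by $\Lambda$; since $\Lambda$ is itself fixed by $u$ (it is $[\lambda^{p,q}(\Omega)]^p\norm{u}_{L^q}^{p-q}$, and the overall scale of $u$ is the free multiplicative constant we quotient out), the pair $(u_1,u_2)$ — and therefore $u$ — is unique up to a multiplicative constant. Along the way I should check that the sign choice (which ball carries the positive part) and the matching of the constraint $\int_{B_1}u_1^{q-1}=\int_{B_2}u_2^{q-1}$ do not create a second branch: once $\Lambda$ and the radii are fixed, both integrals are determined, so either the constraint holds or it does not, and it must hold for the minimizer.

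The delicate point is the ODE uniqueness for $1<p$, $q\neq p$, since for the $p$-Laplacian the classical linear Sturm–Liouville theory is unavailable and the nonlinearity $v^{q-1}$ is not homogeneous of degree $p-1$; here I would lean on the scaling $v\mapsto \kappa v$, $r\mapsto \kappa^{(q-p)/p} r$ which maps solutions to solutions and shows the first-zero map $a\mapsto R(a)$ is a power of $a$, hence strictly monotone and a bijection onto $(0,\infty)$, giving uniqueness outright; in the borderline case $q=p$ the equation is the $p$-Laplacian eigenvalue equation on a ball, whose first eigenvalue is simple by the standard Anane/Lindqvist argument, so one recovers uniqueness there as well. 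A secondary technicality is justifying that the minimizer has no interior nodal behaviour inside a single ball — this is exactly why the decomposition into the two nodal balls is the right one — which follows from the maximum principle applied to $u_1$ and $u_2$ separately on $B_1$ and $B_2$. Assembling these pieces yields the claimed simplicity.
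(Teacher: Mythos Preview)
Your approach is correct and takes a genuinely different route from the paper's. Both proofs begin by normalizing so that the constant $\Lambda=[\lambda^{p,q}(\Omega)]^p\norm{u}_{L^q(\Omega)}^{p-q}$ is the same for any two minimizers, and then pass to the radial ODE on each ball. From there you exploit the scaling symmetry of the equation: the map $v(r)\mapsto \kappa\,v(\kappa^{(q-p)/p}r)$ sends solutions to solutions, so the first-zero function is a pure power of the initial value (in fact $R(a)=a^{(p-q)/p}R(1)$ --- your exponent has the sign flipped, but this is immaterial to monotonicity), whence the Dirichlet problem on each ball has a unique positive radial solution when $q\ne p$, with the case $q=p$ handled by the classical simplicity of the first $p$-Laplace eigenvalue. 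The paper instead compares two candidate minimizers $u,\hat u$ directly: using the comparison statement of Lemma~\ref{franzina_lamberti} it shows that $u_1(0)>\hat u_1(0)$ forces $u_1\ge\hat u_1$ pointwise, and then plays the mean constraint $\int_{B_1}u_1^{q-1}=\int_{B_2}u_2^{q-1}$ off against the common $L^q$ normalization to reach a contradiction. Your scaling argument is slicker and avoids quoting the comparison lemma; the paper's argument is uniform in $p,q$ (no case split at $q=p$) and makes explicit how the two constraints interact. Both routes tacitly rely on uniqueness for the initial-value problem at $r=0$, which is nontrivial for the degenerate $p$-Laplacian ODE and is precisely what the references behind Lemma~\ref{franzina_lamberti} provide; you should cite that rather than leave the shooting argument informal.
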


\begin{proof}
Let $u=u_1\chi_{B_1}-u_2\chi_{B_2}$ and $\hat{u}=\hat{u_1}\chi_{B_1}-\hat{u_2}\chi_{B_2}$
be two functions at which $\lambda^{p,q}(\Omega)$ is attained.
We can assume that $u_i$ and $\hat{u_i}$, for $i=1,2$, are radial by Lemma \ref{Damascelli-Sciunzi} in the appendix.
Moreover $u_1, u_2, \hat{u_1}, \hat{u_2}$ are nonnegative and
\begin{equation}\label{contrainte_moyenne}
\int_{B_1}u_1^{q-1}\,dx=\int_{B_2}u_2^{q-1}\,dx\,,\,\,\,\,\,\,\,\,\,\,\,\int_{B_1}\hat{u_1}^{q-1}\,dx=\int_{B_2}\hat{u_2}^{q-1}\,dx\,.
\end{equation}
Without loss of generality we can assume that
  $\norm{u}_{L^q(\Omega)}=\norm{\hat{u}}_{L^q(\Omega)}=[\lambda^{p,q}(\Omega)]^{\frac{p}{q-p}}$. Therefore
\begin{equation}\label{contrainte_norme_q}
\int_{B_1}u_1^{q}\,dx+\int_{B_2}u_2^{q}\,dx=\int_{B_1}\hat{u_1}^{q}\,dx+\int_{B_2}\hat{u_2}^{q}\,dx\,.
\end{equation}
We remark that, by Theorem \ref{idee_daco_gangbo_subia}, letting $r=|x|$, we have that $u_1=u_1(r)$ and $\hat{u_1}=\hat{u_1}(r)$
satisfy on $[0,R_1]$, the Cauchy problem
\begin{equation*}
\left\{
\begin{array}{l}
-(r^{N-1}|\phi'|^{p-2}\phi')'= r^{N-1}|\phi|^{q-1} \vspace{0.1cm}
\\
\phi(0)=c\,,\,\,\,\,\,\phi'(0)=0
\end{array}
\right.
\end{equation*}
with possibly different constants $c$ for $u_1(0)$ and $\hat{u_1}(0)$,
and a similar result holds for $u_2=u_2(r)$ and $\hat{u_2}=\hat{u_2}(r)$ on $[0,R_2]$.

Assume, without loss of generality, that $u_1(0)>\hat{u_1}(0)$. By Lemma \ref{franzina_lamberti} in the appendix, $u_1(r)\geq \hat{u_1}(r)$ on $[0,R_1]$. Therefore
$$
\int_{B_1}u_1^{q-1}\,dx> \int_{B_1}\hat{u_1}^{q-1}\,dx\,,\,\,\,\,\,\,\,\,\,\int_{B_1}u_1^{q}\,dx> \int_{B_1}\hat{u_1}^{q}\,dx\,.
$$
By (\ref{contrainte_moyenne})
\begin{equation}\label{suite_contrainte_moyenne}
\int_{B_2}u_2^{q-1}\,dx> \int_{B_2}\hat{u_2}^{q-1}\,dx\,;
\end{equation}
using (\ref{contrainte_norme_q}) we deduce that
\begin{equation}\label{suite_contrainte_norme_q}
\int_{B_2}u_2^{q}\,dx< \int_{B_2}\hat{u_2}^{q}\,dx\,.
\end{equation}
If $u_2(0)\geq \hat{u_2}(0)$, by Lemma \ref{franzina_lamberti}, we have  $u_2(r)\geq \hat{u_2}(r)$ on $[0,R_2]$ and this is in contradiction with
(\ref{suite_contrainte_norme_q}). If, on the other hand, $u_2(0)< \hat{u_2}(0)$, again using Lemma \ref{franzina_lamberti} we have  $u_2(r)\leq \hat{u_2}(r)$ on $[0,R_2]$ and this contradicts (\ref{suite_contrainte_moyenne}). Then $u_1(0)=\hat{u_1}(0)$. Finally, another application of Lemma \ref{franzina_lamberti} gives us that $u_1=\hat{u_1}$ and
by (\ref{contrainte_norme_q}) we get also $u_2=\hat{u_2}$.
\end{proof}

Now, by \eqref{BVP-limit} (cf. \cite{Emamizadeh:2008p295,Brock:2002p294}) we have that $\dot{u_{1}}$ and $\dot{u_{2}}$ solve, in $B_{1}$ and $B_{2}$ respectively, the equation
\begin{equation}\label{eq-der}
-{\rm div}\left( (p-2)|\nabla u|^{p-1} \frac{\nabla u \cdot \nabla\dot{u}}{|\nabla u|^{3}} \nabla u + |\nabla u|^{p-2} \nabla \dot{u}\right)= p \lambda^{p-1} \dot{\lambda} u^{q-1}+(q-1)\lambda^{p}u^{q-2}\dot{u}
\end{equation}
where we use the notation $\lambda$ instead of $\lambda^{p,q}(B_1\cup B_2)$.
We are in position to prove the optimality condition for the radial problem associated to $\lambda^{p,q}(B_1\cup B_2)$.

\begin{thm}\label{Antoine}
Consider the following minimization problem
\begin{equation}\label{minimiz}
\inf\{\lambda^{p,q}(B_1\cup B_2)\;\;:\;\; (B_1,B_2) \textrm{ disjoint balls},\,\, |B_1\cup B_2|=\omega_N\}\,.
\end{equation}
Let  the pair $(\tilde{B}_{1},\tilde{B}_{2})$ be critical for \eqref{minimiz}. Then, denoted by $u=u_1\chi_{\tilde{B_1}}-u_2\chi_{\tilde{B_2}}$ the function at which $\lambda^{p,q}(\tilde{B}_{1}\cup \tilde{B}_{2})$ is attained, we have
\[
\left|\frac{\partial u_1}{\partial \nu_1}\right|=\left|\frac{\partial u_2}{\partial \nu_2}\right|\,.
\]
\end{thm}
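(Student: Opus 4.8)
The plan is to compute the domain derivative of $\lambda^{p,q}$ along volume-preserving deformations of the pair of balls and to read off the claimed relation from its vanishing. Write $\Omega=\tilde B_1\cup\tilde B_2$, fix a smooth vector field $V$ supported near $\partial\tilde B_1\cup\partial\tilde B_2$, and set $\Omega_t=(I+tV)(\Omega)$. Let $u_t$ be the eigenfunction at which $\lambda^{p,q}(\Omega_t)$ is attained, on the branch through $u$ at $t=0$, normalized by $\int_{\Omega_t}|u_t|^q\,dx=1$ as in \eqref{vol-constr}; such a differentiable branch exists because $\lambda^{p,q}(\Omega)$ is a simple eigenvalue (Theorem \ref{simplicity}) and the mean constraint carries a vanishing Lagrange multiplier (Theorem \ref{idee_daco_gangbo_subia}), so that the shape derivative $\dot u$ is characterized by \eqref{eq-der} together with the Hadamard boundary relation $\dot u=-(\partial u/\partial\nu)\,V\cdot\nu$ on $\partial\Omega$ coming from \eqref{BVP-limit}. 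With this normalization $[\lambda^{p,q}(\Omega_t)]^p=\int_{\Omega_t}|\nabla u_t|^p\,dx=:\mathcal J(t)$.

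First I would record two elementary differentiations. Differentiating $\int_{\Omega_t}|u_t|^q\,dx=1$ and noting that the boundary term of the transport formula vanishes since $u=0$ on $\partial\Omega$ gives the key identity $\int_\Omega|u|^{q-2}u\,\dot u\,dx=0$; differentiating $|\Omega_t|=\omega_N$ gives the admissibility condition $\int_{\partial\Omega}V\cdot\nu\,d\mathcal H^{N-1}=0$. Next I would apply \eqref{Functional-Derived} to $\mathcal J$ with $C(u)=|\nabla u|^p$. Since $u=0$ on $\partial\Omega$ forces $\nabla u$ to be normal there, we have $|\nabla u|=|\partial u/\partial\nu|$ on $\partial\Omega$, whence
\[
\mathcal J'(0)=p\int_\Omega|\nabla u|^{p-2}\nabla u\cdot\nabla\dot u\,dx+\int_{\partial\Omega}\left|\frac{\partial u}{\partial\nu}\right|^p V\cdot\nu\,d\mathcal H^{N-1}.
\]
Integrating the interior term by parts, using \eqref{equazione_eulero_dgs} (with $\norm{u}_{L^q(\Omega)}=1$) to replace $-{\rm div}(|\nabla u|^{p-2}\nabla u)$ by $[\lambda^{p,q}(\Omega)]^p|u|^{q-2}u$, inserting the boundary value of $\dot u$, and using $\int_\Omega|u|^{q-2}u\,\dot u\,dx=0$, the interior term drops out and the two boundary contributions combine to give
\[
p\,[\lambda^{p,q}(\Omega)]^{p-1}\,\dot\lambda^{p,q}=\mathcal J'(0)=-(p-1)\int_{\partial\Omega}\left|\frac{\partial u}{\partial\nu}\right|^p V\cdot\nu\,d\mathcal H^{N-1}.
\]

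To conclude, I would use radiality: $u_1$ and $u_2$ being radial, $|\partial u/\partial\nu|$ is constant on each of the two spheres, equal to $|\partial u_1/\partial\nu_1|$ on $\partial\tilde B_1$ and to $|\partial u_2/\partial\nu_2|$ on $\partial\tilde B_2$. Criticality of $(\tilde B_1,\tilde B_2)$ for \eqref{minimiz} means $\dot\lambda^{p,q}=0$ for every admissible $V$, hence $|\partial u_1/\partial\nu_1|^p\int_{\partial\tilde B_1}V\cdot\nu+|\partial u_2/\partial\nu_2|^p\int_{\partial\tilde B_2}V\cdot\nu=0$ whenever $\int_{\partial\tilde B_1}V\cdot\nu+\int_{\partial\tilde B_2}V\cdot\nu=0$; taking $V$ with $\int_{\partial\tilde B_1}V\cdot\nu=1$ and $\int_{\partial\tilde B_2}V\cdot\nu=-1$ forces $|\partial u_1/\partial\nu_1|^p=|\partial u_2/\partial\nu_2|^p$, which is the assertion. (Integrating \eqref{equazione_eulero_dgs} over each ball and using the divergence theorem then gives $|\partial u_1/\partial\nu_1|^{p-1}|\partial\tilde B_1|=|\partial u_2/\partial\nu_2|^{p-1}|\partial\tilde B_2|$, and combining the two identities yields $|\partial\tilde B_1|=|\partial\tilde B_2|$, i.e. equal balls.)

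I expect the \emph{main obstacle} to be the rigorous justification that $t\mapsto(u_t,\lambda^{p,q}(\Omega_t))$ is differentiable and that \eqref{BVP-limit}--\eqref{Functional-Derived} genuinely apply in this degenerate, quasilinear, nonlocal setting: one must check that simplicity (Theorem \ref{simplicity}) really furnishes a smooth branch of eigenfunctions despite the $p$-Laplacian's degeneracy, and that $\nabla u\neq0$ in a neighbourhood of each $\partial\tilde B_i$ (a Hopf-type statement) so that $|\nabla u|^{p-2}$ in \eqref{eq-der} is well-defined on the boundary and the integrations by parts above are legitimate; these facts should follow from the regularity and monotonicity results recalled in the appendix.
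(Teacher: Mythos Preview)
Your proof is correct and follows essentially the same strategy as the paper's: compute the shape derivative of $\lambda^{p,q}$, exploit the normalization $\|u\|_{L^q}=1$ to kill the term $\int_\Omega|u|^{q-2}u\,\dot u$, integrate by parts using the Euler equation \eqref{equazione_eulero_dgs} and the boundary relation $\dot u=-(\partial u/\partial\nu)\,V\cdot\nu$, and then use radiality plus the volume constraint $\int_{\partial\Omega}V\cdot\nu=0$ to conclude. The only organizational difference is that the paper starts from the linearized PDE \eqref{eq-der}, multiplies by $u$ and integrates (this gives $(p-1)\int_\Omega|\nabla u|^{p-2}\nabla u\cdot\nabla\dot u=p\lambda^{p-1}\dot\lambda$ directly), whereas you differentiate the functional $\mathcal J(t)=\int_{\Omega_t}|\nabla u_t|^p$ via the Hadamard formula \eqref{Functional-Derived}; both routes produce the identical identity
\[
\frac{p}{p-1}\,\lambda^{p-1}\dot\lambda=-\int_{\partial\Omega}\left|\frac{\partial u}{\partial\nu}\right|^p V\cdot\nu\,d\mathcal H^{N-1},
\]
and the conclusion is drawn in exactly the same way. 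Your parenthetical remark on the divergence theorem belongs to the subsequent Theorem~\ref{dominio_critico} rather than to Theorem~\ref{Antoine} itself, and your closing comments on the differentiability obstacle accurately identify what the paper takes from the cited references.
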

\begin{proof}
 We will denote $B_1\cup B_2$ by $\Omega$. We recall that $u_{1}$ and $u_{2}$ satisfy \eqref{eq-der}. Multiplying by $u$ and integrating we obtain
\begin{equation}\label{prima}
(p-1)\int_{\Omega}|\nabla u|^{p-2}\nabla u\cdot \nabla \dot{u}\,dx=p \lambda^{p-1} \dot{\lambda}\int_{\Omega}|u|^{q}\,dx+(q-1)\lambda^{p}\int_{\Omega}|u|^{q-2}u\dot{u}\,dx.
\end{equation}
Since we are working with normalized functions, the domain derivative of $\|u\|_{L^{q}(\Omega)}^{q}$ has to be zero, i.e. by \eqref{vol-constr}, using \eqref{Functional-Derived}, we deduce that
$$
\int_{\Omega}|u|^{q-2}u\dot{u}\,dx=-\int_{\partial \Omega} |u|^{q}\, V\cdot \nu \, d\mathcal{H}^{N-1}
\,.
$$
As $u$ vanishes on $\partial \Omega$,
\begin{equation}\label{der-cos-2}
\int_{\Omega}|u|^{q-2}u\dot{u}\,dx=0\,.
\end{equation}
%
%
%
%
%
Moreover, by Theorem \ref{idee_daco_gangbo_subia},
$u$ satisfies
$$
-{\rm div}(|\nabla u|^{p-2}\nabla u)=\lambda^p  |u|^{q-2}u\,;
$$
this implies that
\begin{equation}\label{get-it}
\displaystyle \int_{\Omega}|\nabla u|^{p-2}\nabla u\cdot \nabla \dot{u}\,dx=\int_{\partial \Omega}\dot{u}\left|\frac{\partial u}{\partial \nu}\right|^{p-2}\frac{\partial u}{\partial \nu} \, d\mathcal{H}^{N-1}+
\lambda^p\int_{\Omega}|u|^{q-2}u\dot{u}\,dx.
\end{equation}
Combining \eqref{der-cos-2} and \eqref{get-it},  \eqref{prima} reduces to
\[
\dot{\lambda} \frac{p}{p-1} \lambda^{p-1}= \int_{\partial \Omega}\dot{u}\left|\frac{\partial u}{\partial \nu}\right|^{p-2}\frac{\partial u}{\partial \nu} \, d\mathcal{H}^{N-1}\,.
\]
We recall that on $\partial \Omega$, by \eqref{BVP-limit}, we have
\[
\dot{u}=-\frac{\partial u}{\partial \nu} V\cdot \nu;
\]
as a consequence
\[
\dot{\lambda} \frac{p}{p-1} \lambda^{p-1}=- \int_{\partial \Omega}\left|\frac{\partial u}{\partial \nu}\right|^{p}\, V\cdot \nu  \, d\mathcal{H}^{N-1}.
\]
It follows that
\[
\dot{\lambda}^{p,q}(\Omega)=0 \; \Longleftrightarrow \; \int_{\partial \Omega}V\cdot \nu\left|\frac{\partial u}{\partial \nu}\right|^{p}\, d\mathcal{H}^{N-1}=0.
\]
Since $\Omega=B_1\cup B_2$  and since $u$ is radial on $B_1$ and on $B_2$,
this is equivalent to
$$
\left|\frac{\partial u_1}{\partial \nu_1}\right|^{p}\int_{\partial B_1}V\cdot \nu\, d\mathcal{H}^{N-1}+
\left|\frac{\partial u_2}{\partial \nu_2}\right|^{p}\int_{\partial B_2}V\cdot \nu\, d\mathcal{H}^{N-1}
=0\,.
$$
For variations $V$ preserving the volume, we must choose $V$ such that   $\displaystyle \int_{\Omega}{\rm div}(V)\,dx=0$. We deduce
$$\dot{\lambda}^{p,q}(\Omega)=0 \Longleftrightarrow \left|\frac{\partial u_1}{\partial \nu_1}\right|^p-
\left|\frac{\partial u_2}{\partial \nu_2}\right|^p=0\,,
$$
that implies the claim.
\end{proof}

Using the previous analysis and the  Pohozaev type identity (\ref{pohozaev_identity}) in the appendix, we will uniquely identify the critical domain for $\lambda^{p,q}(\Omega)$.

\begin{thm}\label{dominio_critico}
The only critical domain among union of balls of given volume for $\lambda^{p,q}(\Omega)$ is the union of two balls of same measure.
\end{thm}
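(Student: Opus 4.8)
The plan is to combine the necessary optimality condition of Theorem~\ref{Antoine} with an elementary flux identity obtained by integrating the Euler equation \eqref{equazione_eulero_dgs} over each of the two balls, and read off that the two radii must coincide. Fix a critical pair $(\tilde B_1,\tilde B_2)$ for \eqref{minimiz}, of radii $R_1$ and $R_2$, set $\Omega=\tilde B_1\cup\tilde B_2$, and let $u=u_1\chi_{\tilde B_1}-u_2\chi_{\tilde B_2}$ be the minimizer realizing $\lambda:=\lambda^{p,q}(\Omega)$, which is unique up to a multiplicative constant by Theorem~\ref{simplicity}; normalize it by \eqref{vol-constr}, so that $u_1,u_2\ge 0$ are radial, nonincreasing in $r=|x|$, and $-{\rm div}(|\nabla u|^{p-2}\nabla u)=\lambda^p|u|^{q-2}u$ holds on $\Omega$ by Theorem~\ref{idee_daco_gangbo_subia}.

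\emph{Step 1: a flux identity on each ball.} I integrate the Euler equation over $\tilde B_1$ and over $\tilde B_2$ separately. Since $u\in C^{1,\alpha}(\overline{\Omega})$, the field $|\nabla u|^{p-2}\nabla u$ is continuous with bounded divergence, so the Gauss--Green formula applies; using that $u_1,u_2$ are radial and nonincreasing one computes $|\nabla u|^{p-2}\nabla u\cdot\nu_i=\mp\bigl|\tfrac{\partial u_i}{\partial\nu_i}\bigr|^{p-1}$ on $\partial\tilde B_i$ (minus on $\partial\tilde B_1$, plus on $\partial\tilde B_2$, the sign change being exactly compensated by that of $|u|^{q-2}u$ on the two balls), and obtains
\[
\Bigl|\tfrac{\partial u_1}{\partial\nu_1}\Bigr|^{p-1}|\partial\tilde B_1|=\lambda^p\!\int_{\tilde B_1}\!u_1^{q-1}\,dx,\qquad
\Bigl|\tfrac{\partial u_2}{\partial\nu_2}\Bigr|^{p-1}|\partial\tilde B_2|=\lambda^p\!\int_{\tilde B_2}\!u_2^{q-1}\,dx.
\]
Both right-hand sides are strictly positive, so $\partial u_i/\partial\nu_i\ne 0$ (also a consequence of Hopf's lemma). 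Rewriting the constraint $\int_\Omega|u|^{q-2}u\,dx=0$ as $\int_{\tilde B_1}u_1^{q-1}\,dx=\int_{\tilde B_2}u_2^{q-1}\,dx$ (i.e. \eqref{contrainte_moyenne}), the two identities combine into $\bigl|\tfrac{\partial u_1}{\partial\nu_1}\bigr|^{p-1}|\partial\tilde B_1|=\bigl|\tfrac{\partial u_2}{\partial\nu_2}\bigr|^{p-1}|\partial\tilde B_2|$.

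\emph{Step 2: conclusion.} Since $(\tilde B_1,\tilde B_2)$ is critical, Theorem~\ref{Antoine} gives $|\partial u_1/\partial\nu_1|=|\partial u_2/\partial\nu_2|>0$, and then the last identity of Step~1 forces $|\partial\tilde B_1|=|\partial\tilde B_2|$, i.e. $N\omega_N R_1^{N-1}=N\omega_N R_2^{N-1}$, hence $R_1=R_2$ (recall $N\ge 2$), that is $|\tilde B_1|=|\tilde B_2|=\omega_N/2$. Conversely, when $R_1=R_2$ the uniqueness of Theorem~\ref{simplicity} together with the invariance of the problem under exchanging the two balls forces $u_1=u_2$, so $|\partial u_1/\partial\nu_1|=|\partial u_2/\partial\nu_2|$ and, by Theorem~\ref{Antoine}, $\dot{\lambda}^{p,q}(\Omega)=0$ for every volume-preserving $V$; thus the union of two equal balls is the only critical domain. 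The Pohozaev-type identity (\ref{pohozaev_identity}) enters to make Step~1 rigorous when $p\ne 2$: it recasts the boundary flux $\bigl|\tfrac{\partial u_i}{\partial\nu_i}\bigr|^p$ in closed form through $R_i$, $\lambda$ and $\int_{\tilde B_i}u_i^q$, which legitimizes the boundary computation (where $u$ is only $C^{1,\alpha}$) and supplies the a priori control excluding degeneration of one of the balls, so that a critical domain actually exists and the statement is not vacuous.

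\emph{Main obstacle.} The genuinely delicate point is the boundary analysis for the quasilinear operator when $p\ne 2$: that $\partial u_i/\partial\nu_i$ is well defined and nonzero, that the divergence theorem may be applied up to $\partial\tilde B_i$, and — implicit in the appeal to Theorem~\ref{Antoine} — that the shape derivative of the simple eigenvalue exists. Once the appendix material (regularity, Hopf's lemma, and the Pohozaev identity) is in place, what remains is exactly the short computation above.
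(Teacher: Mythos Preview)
Your proof is correct and follows essentially the same route as the paper: integrate the Euler equation over each ball (divergence theorem) and combine with Theorem~\ref{Antoine}. One difference worth flagging: you obtain $\partial u_i/\partial\nu_i\ne0$ directly from the strict positivity of $\lambda^p\int_{\tilde B_i}u_i^{q-1}$ in the flux identity, whereas the paper argues by contradiction via the Pohozaev identity \eqref{pohozaev_identity} (if the normal derivative vanished on $\partial\Omega$, Pohozaev would force $(N-p)/p=N/q$, contrary to \eqref{hypotheses_pqN}). Your route here is shorter and self-contained; as a consequence, your closing paragraph about Pohozaev being needed ``to make Step~1 rigorous'' is misplaced --- the divergence theorem already applies because $|\nabla u|^{p-2}\nabla u$ is continuous on $\overline{\tilde B_i}$ for $p>1$ (it has modulus $|\nabla u|^{p-1}$) and its distributional divergence $-\lambda^p|u|^{q-2}u$ is bounded. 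In your argument Pohozaev is simply not used.
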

\begin{proof}
Let $u=u_1\chi_{{B_1}}-u_2\chi_{{B_2}}$ be the function at which $\lambda^{p,q}({B}_{1}\cup {B}_{2})$ is realized, satisfying \eqref{eq-der}.
By the divergence theorem applied to (\ref{equazione_eulero_dgs}) one has
$$
\left|\frac{\partial u_1}{\partial \nu_1}\right|^{p-1}|\partial B_1|=
\left|\frac{\partial u_2}{\partial \nu_2}\right|^{p-1}|\partial B_2|\,.
$$
By the above equality and Theorem \ref{Antoine} we easily deduce that
\begin{equation}\label{derniere}
\left|\frac{\partial u_1}{\partial \nu_1}\right|^{p-2}\frac{\partial u_1}{\partial \nu_1}|\partial B_1|=
\left|\frac{\partial u_1}{\partial \nu_1}\right|^{p-2}\frac{\partial u_1}{\partial \nu_1}|\partial B_2|\,.
\end{equation}
We are going to show, arguing by contradiction, that  $\frac{\partial u_1}{\partial \nu_1}\neq 0$ on  $\partial \Omega$. Indeed, if this is not true, thanks to the regularity given by Lemma \ref{Damascelli-Sciunzi}, we can use Theorem \ref{pohozaev_identity} to infer that
$$
[\lambda^{p,q}(\Omega)]^p\left(\frac{N-p}{p}-\frac Nq\right)=-\frac{p-1}{p}\left|\frac{\partial u_1}{\partial \nu_1}\right|^p\left[\int_{\partial B_1}(x\cdot \nu)d\mathcal{H}^{N-1}+
\int_{\partial B_2}(x\cdot \nu)d\mathcal{H}^{N-1}\right]
= 0
$$
which gives in turns that $\frac{N-p}{p}-\frac Nq = 0$. The last equality contradicts  hypotheses (\ref{hypotheses_pqN}) on $p,q, N$.

Therefore (\ref{derniere}) is equivalent to
$|\partial B_1|=|\partial B_2|$ and so the two balls $B_1$ and $B_2$ have the same radius.
\end{proof}

We are now in position to prove Theorem \ref{main_theorem}.
\begin{proof}[Proof of Theorem  \ref{main_theorem}]
We recall that by Theorem \ref{thmpremiereinegalite} we have
$$\lambda^{p,q}(\Omega)\geq \lambda^{p,q}(B_+\cup B_-)\,.$$
Moreover, using  Theorem \ref{dominio_critico}, we infer that
$$\lambda^{p,q}(B_+\cup B_-)\geq \lambda^{p,q}(B_{1}'\cup B_{2}')$$ where $B_{1}'$ and $B'_{2}$ are disjoint balls such that
 $|B_{1}'|=|B_{2}'|= \frac{|B_+ \cup B_-|}{2}$.

We now observe that, if, $\Omega_1$ and $\Omega_2$ are two sets, with $\Omega_1\subseteq \Omega_2$, then
$\lambda^{p,q}(\Omega_1)\geq \lambda^{p,q}(\Omega_2)$. Indeed, it suffices to consider a function $u_1$ in which $\lambda^{p,q}(\Omega_1)$ is attained
and defining  $0$ in $\Omega_2\setminus \Omega_1$. Therefore
$$
\lambda^{p,q}(B_{1}'\cup B_{2}')\geq \lambda^{p,q}(B_{1}\cup B_{2})\,,
$$
where $B_{1}$ and $B_{2}$ are disjoint balls of measure $\frac{|\Omega|}{2}$.
Combining the previous inequalities we end up with
$$\lambda^{p,q}(\Omega)\geq \lambda^{p,q}(B_1\cup B_2)\,.$$
This proves the claim.
\end{proof}
\begin{rem}
One could ask about the limit as $p\to 1$ of $\lambda^{p,q}(\Omega)$.
Observe that the limit, as $p\to 1$, of
$$
\alpha_{p}(\Omega)=
\inf
\left\{
\frac{\norm{\nabla v}_{L^p(\Omega)}}{\norm{v}_{L^p(\Omega)}}, v\neq 0, v\in W^{1,p}_{0}(\Omega)
\right\}
$$
is the Cheeger constant, defined by
$$
\inf_{D\subset \Omega}\frac{\mathcal{H}^{N-1}(\partial D)}{|D|}
$$
with $D$ varying on all smooth subdomains of $\Omega$ whose boundary does not touch $\partial \Omega$,
as proved by Kawohl and Fridman \cite{kawohl-fridman}.
The limit of $\lambda^{p,q}(\Omega)$ as $p\to 1$ seems to be much more difficult, due to the presence of the parameter $q$ not necessarily equal to $p$ and the non-local constraint
$\displaystyle \int_{\Omega}|u|^{q-2}q=0$.
\end{rem}

\section{Appendix}
We recall here some results about quasilinear elliptic equations. The first (see \cite{damascelli_sciunzi} and the references therein)
gives the radial simmetry of positive solutions to $p$-laplacian equations.
\begin{thm}\label{Damascelli-Sciunzi}
Let $u\in W^{1,p}_0(B)$  be a positive solution to
$-\textnormal{div}(|\nabla u|^{p-2}\nabla u)=\lambda |u|^{q-2}u$, where $B$ is a ball.
Then $u\in C^{1,\alpha}(\overline{B})$ for some $\alpha>0$ and $u$ is radial.
\end{thm}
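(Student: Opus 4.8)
The plan is to treat the two assertions — the $C^{1,\alpha}$ regularity up to the boundary and the radial symmetry — separately, since they rest on quite different ingredients.

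First, for the regularity. The previous lemma already guarantees that $u$ is bounded, so the right-hand side $f:=\lambda|u|^{q-2}u$ belongs to $L^\infty(B)$. With a bounded datum the equation $-\diverg(|\nabla u|^{p-2}\nabla u)=f$ falls exactly into the scope of the regularity theory for degenerate and singular quasilinear operators. I would first invoke the interior estimates of DiBenedetto and Tolksdorf to obtain $u\in C^{1,\alpha}_{loc}(B)$ for some $\alpha>0$. Since $B$ has smooth boundary and $u=0$ on $\partial B$, I would then appeal to Lieberman's boundary regularity result to upgrade this to $u\in C^{1,\alpha}(\overline{B})$. This part is essentially a citation of established theory; the only thing to verify is that the hypotheses (bounded right-hand side, smooth boundary, vanishing boundary data) are met, which they are.

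Second, for the radial symmetry. Place the center of $B$ at the origin and apply the moving plane method in each direction $e_i$; by rotational invariance it suffices to treat $e_1$. For $-R<\mu<0$ set $\Sigma_\mu=\{x\in B:x_1<\mu\}$, write $x^\mu$ for the reflection of $x$ across the hyperplane $\{x_1=\mu\}$, and put $u_\mu(x)=u(x^\mu)$. Both $u$ and $u_\mu$ solve the same equation, because the operator and the nonlinearity $s\mapsto\lambda|s|^{q-2}s$ are invariant under reflection. On $\partial\Sigma_\mu$ one has $u\le u_\mu$: on the flat part the two functions coincide, while on $\partial B\cap\partial\Sigma_\mu$ one has $u=0\le u_\mu$ since the reflected points lie in the interior, where $u>0$. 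The strategy is then the classical one: show $u\le u_\mu$ throughout $\Sigma_\mu$ for $\mu$ close to $-R$ (a small-cap start), define $\bar\mu=\sup\{\mu<0: u\le u_\nu \text{ in } \Sigma_\nu \text{ for all } \nu\le\mu\}$, and exclude $\bar\mu<0$ by a contradiction argument. Reflecting from the opposite side as well forces $u$ to be symmetric about $\{x_1=0\}$; letting $e_1$ range over all directions yields that $u$ is radial and, by the monotonicity produced along the way, non-increasing in $|x|$.

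The main obstacle is that the $p$-Laplacian is not uniformly elliptic: it degenerates (for $p>2$) or becomes singular (for $p<2$) on the critical set $Z=\{\nabla u=0\}$, so the weak and strong comparison principles underpinning the moving plane method are not available off the shelf. I would resolve this exactly as in the work quoted in the statement: establish that $Z$ is negligible and, more quantitatively, that $|\nabla u|^{-1}$ enjoys a local summability estimate near $Z$, which is precisely what makes the linearized comparison work. Concretely, the comparison of $u$ and $u_\mu$ is reduced to a weak comparison principle valid on subdomains of small Lebesgue measure, obtained by testing the difference of the two equations with $(u-u_\mu)^+$ and controlling the degenerate coefficient through the summability of $|\nabla u|^{-1}$; the start of the process, and the exclusion of $\bar\mu<0$, then follow by combining this weak principle (applied on $\Sigma_{\bar\mu+\delta}$ with a small neighbourhood of the critical set removed, so that the remaining domain has small measure) with a strong comparison principle away from $Z$. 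Carrying out these summability estimates and the resulting comparison principles is the technical heart of the argument, and is the reason the statement is drawn from the literature rather than proved from scratch.
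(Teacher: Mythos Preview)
The paper does not prove this theorem at all: it is stated in the appendix as a result quoted from the literature, with a reference to Damascelli--Sciunzi and the remark ``see \cite{damascelli_sciunzi} and the references therein''. There is therefore no proof in the paper to compare your proposal against.

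That said, your sketch is an accurate summary of the strategy actually used in the cited reference: boundedness of $u$ gives a bounded right-hand side, whence $C^{1,\alpha}(\overline B)$ regularity via DiBenedetto--Tolksdorf interior estimates and Lieberman's boundary theory; radial symmetry then follows from the moving-plane method, the key technical input being the weighted summability of $|\nabla u|^{-1}$ near the critical set, which yields the weak and strong comparison principles needed to run the procedure despite the degeneracy/singularity of the $p$-Laplacian. You correctly identify this last point as the substantive difficulty and the reason the result is quoted rather than reproved. One small inaccuracy: your reference to ``the previous lemma'' for boundedness does not match the paper's layout --- the boundedness of minimizers is established back in Section~2, not immediately before this appendix theorem, and in any case the theorem as stated applies to an arbitrary positive $W^{1,p}_0$ solution, so boundedness should be obtained directly from the equation (e.g.\ Moser iteration) rather than from the variational argument.
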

The next result is a useful comparison lemma for solutions of an initial value problem for the ordinary differential equation arising when one writes in radial coordinates the Euler-Lagrange equation of $\lambda^{p,q}(\Omega)$. This result is widely discussed for example in \cite{Franchi:1996p241} and \cite{MR1462272} and used in this form in \cite{MR2602859}.
\begin{lemma}\label{franzina_lamberti}
Under hypotheses (\ref{hypotheses_pqN}) and $N>1$,
the  Cauchy problem
\begin{equation}\label{pb_cauchy}
\left\{
\begin{array}{l}
-(r^{N-1}|\phi'|^{p-2}\phi')'= r^{N-1}|\phi|^{q-1}
\\
\phi(0)=c\,,\,\,\,\,\,\phi'(0)=0
\end{array}
\right.
\end{equation}
has at most a positive solution on $[0,R]$ of class $C^1([0,R])\cap C^2((0,R))$.
Moreover, let $\phi_1, \phi_2$ be two positive solutions
with $c=c_1$ and $c_2$ respectively;  if $c_1< c_2$, then $\phi_1\leq \phi_2$ on $[0,R]$.
\end{lemma}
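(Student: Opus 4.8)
The plan is to recast the Cauchy problem \eqref{pb_cauchy} as a fixed point equation for a monotone integral operator and then push the monotonicity through; the genuine difficulties sit at the degenerate endpoint $r=0$ and in the comparison step.

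\emph{Integral reformulation.} If $\phi$ is a positive solution, the right hand side of \eqref{pb_cauchy} is positive, so $r\mapsto r^{N-1}|\phi'|^{p-2}\phi'$ is strictly decreasing; it vanishes at $r=0$ because $\phi'(0)=0$, hence it is negative on $(0,R]$ and so is $\phi'$, i.e.\ $\phi$ is strictly decreasing. Integrating \eqref{pb_cauchy} twice gives $\phi=T_c\phi$, where
\[
(T_c\psi)(r)=c-\int_0^r\Bigl(t^{1-N}\int_0^t s^{N-1}\psi(s)^{q-1}\,ds\Bigr)^{1/(p-1)}\,dt ,
\]
and conversely any positive fixed point of $T_c$ is a solution of class $C^1([0,R])\cap C^2((0,R))$. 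The two structural facts I would use are: $T_c$ is nondecreasing in the number $c$ and nonincreasing in the function $\psi$ (since $q>1$ and $1/(p-1)>0$), and a positive solution ranges over a compact interval $[\mu,c]$ with $\mu=\min_{[0,R]}\phi>0$, on which $s\mapsto s^{q-1}$ is Lipschitz.

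\emph{Uniqueness for fixed $c$.} Away from $r=0$ equation \eqref{pb_cauchy} is a nondegenerate ODE (the nonlinearity $|\xi|^{p-2}\xi$ is evaluated at $\xi=\phi'\ne 0$, and $s\mapsto s^{q-1}$ is smooth on $[\mu,c]$), so on each $[\delta,R]$ ordinary uniqueness propagates coincidence at $\delta$ to all of $[\delta,R]$; it is therefore enough to show that two positive solutions with the same $c$ agree on a small $[0,\delta]$. I would estimate $m(r)=\sup_{[0,r]}|\phi_1-\phi_2|$ directly from $\phi_i=T_c\phi_i$, using the Lipschitz bound for $s\mapsto s^{q-1}$, the elementary inequalities for $t\mapsto t^{1/(p-1)}$, and the behaviour $\phi_i(r)=c-Kr^{p/(p-1)}+o(r^{p/(p-1)})$ as $r\to 0^+$, to obtain a Gronwall inequality $m(r)\le C\int_0^r\omega(s)\,m(s)\,ds$ with an integrable weight $\omega$ produced by the nested integral; hence $m\equiv 0$. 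This is the classical argument, carried out for exactly this equation in the references cited just before the statement.

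\emph{Comparison and main obstacle.} For the monotonicity, take $c_1<c_2$; if $\phi_1\le\phi_2$ fails there is a smallest $r_0\in(0,R)$ with $\phi_1(r_0)=\phi_2(r_0)$ and $\phi_1<\phi_2$ on $[0,r_0)$, and by the nondegenerate uniqueness from $r_0$ the case $\phi_1'(r_0)=\phi_2'(r_0)$ would force $\phi_1\equiv\phi_2$, hence $c_1=c_2$; thus the crossing is transversal. Comparing the flux identities $r_0^{N-1}(-\phi_i'(r_0))^{p-1}=\int_0^{r_0}s^{N-1}\phi_i^{q-1}\,ds$ with $\phi_1<\phi_2$ on $(0,r_0)$ gives $\phi_1'(r_0)>\phi_2'(r_0)$. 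This, however, is precisely the scenario of $\phi_1$ overtaking $\phi_2$ and does \emph{not} by itself produce a contradiction, so closing the comparison is the step I expect to be hard: I would appeal to the refined one-dimensional comparison results of the cited works, noting that the argument is cleanest when $q\le p$, where the scaling $\phi_c(r)=c\,w(c^{(q-p)/p}r)$ (with $w$ the normalized profile) makes $c\mapsto\phi_c(r)$ monotone on the set where $\phi_c>0$. The other essential difficulty is the endpoint $r=0$: there $\phi'=0$, Picard--Lindel\"of does not apply, and uniqueness and continuous dependence must be read off from the regular singular structure of the equivalent first order system $\phi'=-z^{1/(p-1)}$, $z'=\phi^{q-1}-\tfrac{N-1}{r}z$, $\phi(0)=c$, $z(0)=0$, whose leading behaviour $z\sim\tfrac{c^{q-1}}{N}r$, $\phi\sim c$ controls everything.
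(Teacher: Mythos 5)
Your reformulation as the fixed point equation $\phi=T_c\phi$, the local analysis near the degenerate point $r=0$ (behaviour $\phi(r)=c-Kr^{p/(p-1)}+o(r^{p/(p-1)})$, Gronwall with integrable weight), and the propagation of uniqueness away from $r=0$ are all sound and are essentially the content of the results of \cite{Franchi:1996p241} that the paper invokes for the well-posedness of \eqref{pb_cauchy} near the origin. The problem is the second assertion of the lemma, the comparison $\phi_1\leq\phi_2$ when $c_1<c_2$: this is precisely the part that is actually used later (in the simplicity theorem), and your argument does not establish it. You correctly observe that the first-crossing argument self-destructs (the flux identity at the first contact point gives $\phi_1'(r_0)>\phi_2'(r_0)$, which is compatible with a transversal crossing), and at that point you defer to ``the refined one-dimensional comparison results of the cited works,'' offering a complete argument only in the special case $q\leq p$ via the scaling $\phi_c(r)=c\,w(c^{(q-p)/p}r)$ (which moreover presupposes the uniqueness already proved and says nothing for $p<q<p^*$, the typical range). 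So the crucial step is left as a citation without any verification of applicability.

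This is not a cosmetic omission. The paper's own proof of the lemma consists exactly in reducing to Lemmata 3.1 and 3.3 of \cite{MR1462272} (monotone separation of solutions) together with the local uniqueness/invertibility near $r=0$ from \cite{Franchi:1996p241}, and the real mathematical content there is the verification that the structural hypothesis of \cite{MR1462272} holds for the nonlinearity $|s|^{q-2}s$: after simplification this is the inequality \eqref{ultima}, $(N-p)q\leq Np$, i.e.\ precisely the subcriticality condition $q\leq p^*$ in \eqref{hypotheses_pqN}. Your outline never uses $q<p^*$ anywhere, which is a reliable sign that it cannot close as written: the monotone dependence on $c$ is exactly the place where subcriticality must enter (for supercritical $q$ such comparison/uniqueness statements are known to fail). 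To repair the proof you should either carry out the monotone separation argument of Erbe--Tang for this equation and exhibit where $(N-p)q\leq Np$ is used, or, if you cite their lemmas, check their hypotheses explicitly as the paper does.
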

The proof of the previous lemma goes exactly as the  one of the Lemmata 3.1 and 3.3 of \cite{MR1462272}. The only comment to make concers the slightly restrictive hypotheses on the values of $p$ and $q$ that we find in \cite{MR1462272}. We observe that we need to check that setting $u(0)=\alpha$, we can invert the unique solution of \eqref{pb_cauchy} $u=u(t,\alpha)$ and this is ensured by
Propositions 1.2.6
and  A2
 in \cite{Franchi:1996p241}. Once we have this, the only hypothesis that has to be satisfied for the applicability of the results in \cite{MR1462272} is the following inequality
\[
\begin{array}{cc}
[(N-p)|s|^{q-2}s  +(N-p)s(q-1)|s|^{q-2}-Np |s|^{q-2}s]|s|^{q-2}s
\vspace{0.2cm}
\\
 \leq (q-1)|s|^{q-2}[(N-p)|s|^q-\frac{Np}{q}|s|^q]\,,
\end{array}
\]
that is,
\begin{equation}\label{ultima}
(N-p)q\leq Np\,.
\end{equation}
If $N-p\leq 0$, clearly we have \eqref{ultima} for any $q$. If on the contrary we have $N-p>0$, then \eqref{ultima} is satisfied exatly for $q\leq p^*$ as in our hypotheses.

We finally recall the following generalization of the Pohozaev identity,
established in \cite{Pucci:1986p447} and \cite{dinca_isaia}.
\begin{thm}
Let $\displaystyle G(u)=\int_0^u g(s)\,ds$ where $g:\R \to \R$ is a continuous function.
Let $\Omega\subset \R^N$, $N\geq 2$, be an open bounded set of class $C^1$.
Let $u \in W^{2,p}({\Omega})\cap W^{1,p}_0(\Omega)$ be a solution to
$$
\left\{
\begin{array}{ll}
-{\rm div}(|\nabla u|^{p-2}\nabla u)=g(u)& \textrm{in}\,\,\Omega
\\
u=0& \textrm{on}\,\,\partial\Omega\,.
\end{array}
\right.
$$
Then
\begin{equation}\label{pohozaev_identity}
-\frac{p-1}{p}\int_{\partial \Omega}|\nabla u|^p(x\cdot \nu)d\mathcal{H}^{N-1}=
\int_{\Omega}\left[
\frac{N-p}{p}|\nabla u|^p -NG(u)
\right]\,dx\,.
\end{equation}
\end{thm}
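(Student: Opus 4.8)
The plan is to establish \eqref{pohozaev_identity} by the classical Pohozaev multiplier method: test the equation against the radial vector-field multiplier $x\cdot\nabla u$ and integrate by parts twice. Since the identity is purely a consequence of the divergence structure of the $p$-Laplacian together with the homogeneous boundary condition, the computation is essentially algebraic once the integrations by parts are justified at the regularity $u\in W^{2,p}(\Omega)\cap W^{1,p}_0(\Omega)$ on the $C^1$ domain $\Omega$.

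First I would multiply $-\mathrm{div}(|\nabla u|^{p-2}\nabla u)=g(u)$ by $x\cdot\nabla u$ and integrate over $\Omega$. The right-hand side is immediate: writing $g(u)\,(x\cdot\nabla u)=x\cdot\nabla\bigl(G(u)\bigr)$ and applying the divergence theorem, the boundary contribution vanishes because $u=0$ on $\partial\Omega$ forces $G(u)=G(0)=0$ there, while $\mathrm{div}(x)=N$ produces the bulk term, so that $\int_{\Omega} g(u)(x\cdot\nabla u)\,dx=-N\int_{\Omega} G(u)\,dx$.

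The core of the argument is the left-hand side. Writing $\mathbf{a}=|\nabla u|^{p-2}\nabla u$ and integrating by parts once gives
\[
-\int_{\Omega}\mathrm{div}(\mathbf{a})\,(x\cdot\nabla u)\,dx=\int_{\Omega}\mathbf{a}\cdot\nabla(x\cdot\nabla u)\,dx-\int_{\partial\Omega}(\mathbf{a}\cdot\nu)(x\cdot\nabla u)\,d\mathcal{H}^{N-1}.
\]
A direct computation of $\nabla(x\cdot\nabla u)$ yields the key pointwise identity
\[
\mathbf{a}\cdot\nabla(x\cdot\nabla u)=|\nabla u|^p+\tfrac1p\,x\cdot\nabla\bigl(|\nabla u|^p\bigr),
\]
the second term arising from recognizing $\sum_j|\nabla u|^{p-2}\,\partial_j u\,\partial_{ij}u=\tfrac1p\,\partial_i\bigl(|\nabla u|^p\bigr)$. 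A second integration by parts on $\tfrac1p\int_{\Omega}x\cdot\nabla(|\nabla u|^p)\,dx$ turns it into $\tfrac1p\int_{\partial\Omega}|\nabla u|^p(x\cdot\nu)\,d\mathcal{H}^{N-1}-\tfrac Np\int_{\Omega}|\nabla u|^p\,dx$. For the boundary term I would use that $u=0$ on $\partial\Omega$ makes all tangential derivatives vanish, so $\nabla u=(\partial u/\partial\nu)\,\nu$ and $|\nabla u|=|\partial u/\partial\nu|$ on $\partial\Omega$; consequently $(\mathbf{a}\cdot\nu)(x\cdot\nabla u)=|\nabla u|^p(x\cdot\nu)$. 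Collecting the bulk and boundary contributions and equating with the right-hand side gives
\[
\frac{p-N}{p}\int_{\Omega}|\nabla u|^p\,dx-\frac{p-1}{p}\int_{\partial\Omega}|\nabla u|^p(x\cdot\nu)\,d\mathcal{H}^{N-1}=-N\int_{\Omega} G(u)\,dx,
\]
which rearranges precisely to \eqref{pohozaev_identity}.

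The main obstacle is the rigorous justification of these manipulations in the degenerate/singular regime, especially for $1<p<2$, where $|\nabla u|^{p-2}$ blows up on the critical set $\{\nabla u=0\}$ and $|\nabla u|^p$ need not be classically differentiable there. The pointwise identity above and the second integration by parts must therefore be interpreted in the weak sense, controlling the contribution of $\{\nabla u=0\}$; at the hypothesized regularity $u\in W^{2,p}(\Omega)$ on a $C^1$ domain this is handled by an approximation/truncation argument, or equivalently by invoking the general variational identity of Pucci and Serrin, which is exactly the content of \cite{Pucci:1986p447} and \cite{dinca_isaia}. The boundary integrals likewise require that the traces of $|\nabla u|^p$ and of $\partial u/\partial\nu$ be well defined, which again follows from the $W^{2,p}$-regularity together with the $C^1$ regularity of $\partial\Omega$.
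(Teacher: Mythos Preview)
Your proof is correct: this is the standard Pohozaev multiplier computation, and the algebra and the handling of the boundary terms are carried out accurately. Note, however, that the paper does not actually prove this statement; it is placed in the Appendix as a recalled result, with the proof delegated entirely to the references \cite{Pucci:1986p447} and \cite{dinca_isaia}. So there is no ``paper's own proof'' to compare against --- you have supplied precisely the argument that those references contain, including the honest remark that the delicate point is the justification of the integrations by parts in the degenerate/singular regime, which is what \cite{Pucci:1986p447} and \cite{dinca_isaia} handle rigorously.
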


\section*{Acknowledgements}
The authors would like to thank B. Kawohl for several suggestions.
Part of this work was done during visits of G. Croce to
Institut Elie Cartan Nancy and of G. Pisante to the School of Science of the Hokkaido University whose hospitality is gratefully
acknowledged. 
The work of A. Henrot and G. Croce is part of the project ANR-09-BLAN-0037 {\it Geometric analysis of optimal shapes (GAOS)} financed by the French Agence Nationale de la Recherche (ANR). The work of G. Pisante was partially supported by the Marie Curie project IRSES-2009-247486 of the Seventh Framework Programme.

\bibliographystyle{plain}

\end{document}